\definecolor{lightgray}{rgb}{0.8, 0.8, 0.8}
\definecolor{darkgray}{rgb}{0.7, 0.7, 0.7}
\definecolor{darkblue}{rgb}{0, 0, .4}
\newtheorem{theorem}{Theorem}[section]
\newtheorem{proposition}[theorem]{Proposition}
\newtheorem{lemma}[theorem]{Lemma}
\newtheorem{definition}[theorem]{Definition}
\newtheorem{corollary}[theorem]{Corollary}
\newtheorem{remark}[theorem]{Remark}
\newtheorem{notation}[theorem]{Notation}
\newtheoremstyle{example}{\topsep}{\topsep}%
     {}
     {}
     {\bfseries}
     {.}
     {.5em}
     {\thmname{#1}\thmnumber{ #2}}
\theoremstyle{example}
\newtheorem{example}[theorem]{Example}
\newtheoremstyle{negexample}{\topsep}{\topsep}%
     {}
     {}
     {\bfseries}
     {.}
     {.5em}
     {\thmname{#1}\thmnumber{ #2}}
\theoremstyle{negexample}
\newcounter{todocounter}
\long\def\symbolfootnote[#1]#2{\begingroup%
\def\thefootnote{\fnsymbol{footnote}}\footnote[#1]{#2}\endgroup}
\newcommand{\Rm}[1]{\expandafter\@slowromancap\romannumeral #1@}
\newfont{\footsc}{cmcsc10 at 8truept}
\newfont{\footbf}{cmbx10 at 8truept}
\newfont{\footrm}{cmr10 at 10truept}
\renewenvironment{abstract}%
                {
                  \begin{list}{}%
                     {\setlength{\rightmargin}{1in}%
                      \setlength{\leftmargin}{1in}}%
                   \item[]\ignorespaces\begin{small}}%
                 {\end{small}\unskip\end{list}}
\keywords{data structure, permutation pattern, covincular pattern, sorting, stack, stacks in series, Euler number, Entringer number, generating function}
\title{\sc{Passing through a stack $k$ times with reversals}}
\author{Toufik Mansour
\\[-0.25ex]
\small Department of Mathematics\\[-0.5ex]
\small University of Haifa\\[-0.5ex]
\small 3498838 Haifa, Israel\\[15pt]
Howard Skogman
\\[-0.25ex]
\small Department of Mathematics\\[-0.5ex]
\small SUNY Brockport\\[-0.5ex]
\small Brockport, New York\\[15pt]
Rebecca Smith
\\[-0.25ex]
\small Department of Mathematics\\[-0.5ex]
\small SUNY Brockport\\[-0.5ex]
\small Brockport, New York\\[-1.5ex]
}
\date{}
\begin{document}
\maketitle

\pagestyle{main}

\newcommand{\s}{\mathbf{s}}
\newcommand{\m}{\mathbf{m}}
\renewcommand{\t}{\mathbf{t}}
\renewcommand{\b}{\mathbf{b}}
\newcommand{\f}{\mathbf{f}}
\newcommand{\rev}{\operatorname{rev}}
\newcommand{\dual}{\operatorname{dual}}
\newcommand{\UW}{\mathcal{UW}}
\newcommand{\R}{\mathcal{R}}
\newcommand{\D}{\mathcal{D}}
\newcommand{\E}{\mathcal{E}}
\newcommand{\inv}{\textit{inv}}
\newcommand{\Av}{\operatorname{Av}}
\newcommand{\Z}{\mathbb{Z}}

\newcommand\notsotiny{\@setfontsize\notsotiny{6}{7}}


\newcommand{\OEISlink}[1]{\href{http://oeis.org/#1}{#1}}
\newcommand{\OEISref}{\href{http://oeis.org/}{OEIS}~\cite{sloane:the-on-line-enc:}}
\newcommand{\OEIS}[1]{(Sequence \OEISlink{#1} in the \OEISref.)}

%
%
%
%
%
%
%
%

\def\sdwys #1{\xHyphenate#1$\wholeString}
\def\xHyphenate#1#2\wholeString {\if#1$%
\else\say{\ensuremath{#1}}\hspace{2pt}%
\takeTheRest#2\ofTheString
\fi}
\def\takeTheRest#1\ofTheString\fi
{\fi \xHyphenate#1\wholeString}
\def\say#1{\begin{turn}{-90}\ensuremath{#1}\end{turn}}

\newenvironment{onestack}
{
	\begin{scriptsize}
	\psset{xunit=0.033in, yunit=0.033in, linewidth=0.014in}
	\begin{pspicture}(0,3.5)(31,19)
	\psline{c-c}(0,15)(10,15)
	\psline{c-c}(13,15)(13,5)(18,5)(18,15)
	\psline{c-c}(21,15)(31,15)
	\rput[l](0,12.5){\mbox{output}}
	\rput[r](31,12.5){\mbox{input}}
}
{
	\end{pspicture}
	\end{scriptsize}
}

\newcommand{\fillstack}[3]{%
	\rput[l](0,17){\ensuremath{#1}}
	\rput[c](15.6, 9.5){\begin{sideways}{\sdwys{#2}}\end{sideways}}
	\rput[r](31,17){\ensuremath{#3}}
}

\newcommand{\stackinput}{%
	\psline[linecolor=darkgray]{c->}(24, 17)(15.5, 17)(15.5, 14.2)
}
\newcommand{\stackoutput}{%
	\psline[linecolor=darkgray]{c->}(15.5, 14.2)(15.5, 17)(10, 17)
}
\newcommand{\stackinoutput}{%
	\psline[linecolor=darkgray]{c->}(24, 17.5)(17, 17.5)(17, 13)
	\psline[linecolor=darkgray]{c->}(15,14)(15, 17.5)(10, 17.5)
}

\newcommand{\firstpass}{%
	\rput[c](16, 3.5){\text{First reverse pass.}}
}
\newcommand{\secondpass}{%
	\rput[c](16, 3.5){\text{Second reverse pass.}}
}
\newcommand{\thirdpass}{%
	\rput[c](16, 3.5){\text{Third reverse pass.}}
}

\newenvironment{threestacks}
{
	\begin{scriptsize}
	\psset{xunit=0.033in, yunit=0.033in, linewidth=0.014in}
	\begin{pspicture}(0,5)(45,19.5)
	\psline{c-c}(0,15)(10,15)(10,5)(15,5)(15,15)(20,15)(20,5)(25,5)(25,15)(30,15)(30,5)(35,5)(35,15)(45,15)
	\rput[l](-1,12.5){\mbox{output}}
	\rput[r](45,12.5){\mbox{input}}
}
{
	\end{pspicture}
	\end{scriptsize}
}

\newcommand{\fillstackseries}[5]{%
	\rput[l](-0.5,17){\ensuremath{#1}}
	\rput[c](12.6, 9.5){\begin{sideways}{\sdwys{#2}}\end{sideways}}
	\rput[c](22.6, 9.5){\begin{sideways}{\sdwys{#3}}\end{sideways}}
	\rput[c](32.6, 9.5){\begin{sideways}{\sdwys{#4}}\end{sideways}}
	\rput[r](45,17){\ensuremath{#5}}
}

\newcommand{\stackinputseries}{%
	\psline[linecolor=darkgray]{c->}(37, 17)(32.5, 17)(32.5, 14)
}
\newcommand{\stacktransfer}{%
	\psline[linecolor=darkgray]{c->}(32.5, 14)(32.5, 17)(22.5, 17)(22.5, 14)
}
\newcommand{\stacktransferandoutput}{%
	\psline[linecolor=darkgray]{c->}(32.5, 14)(32.5, 17)(23.5, 17)(23.5, 14)
	\psline[linecolor=darkgray]{c->}(22, 14)(22, 17)(13.5, 17)(13.5, 14)
	\psline[linecolor=darkgray]{c->}(12, 14)(12, 17)(8, 17)
}
\newcommand{\stacktransfertwoandoutput}{%
	\psline[linecolor=darkgray]{c->}(22, 14)(22, 17)(13.5, 17)(13.5, 14)
	\psline[linecolor=darkgray]{c->}(12, 14)(12, 17)(8, 17)
}

\newcommand{\stacktransfertwo}{%
	\psline[linecolor=darkgray]{c->}(22.5, 14)(22.5, 17)(12.5, 17)(12.5, 14)
}
\newcommand{\stackoutputseries}{%
	\psline[linecolor=darkgray]{c->}(12.5, 14)(12.5, 17)(8, 17)
}

\begin{abstract}
We consider a stack sorting algorithm where only the appropriate output values are popped from the stack and then any remaining entries in the stack are run through the stack in reverse order.  We identify the basis for the $2$-reverse pass sortable permutations and give computational results for some classes with larger maximal rev-tier.  
We also show all classes of $(t+1)$-reverse pass sortable permutations are finitely based.  Additionally, a new Entringer family consisting of maximal rev-tier permutations of length $n$ was discovered along with a bijection between this family and the collection of alternating permutations of length $n-1$.  We calculate generating functions for the number permutations of length $n$ and exact rev-tier $t$.  
\end{abstract}

\section{Introduction}~\label{intro}

We begin with the notion of permutation (or pattern) containment.

\begin{definition}  A permutation $\pi=\pi_1\pi_2\dots\pi_n\in S_n$ is said to \emph{contain} a permutation $\sigma=\sigma_1\sigma_2\ldots\sigma_k$ if there exist indices $1\le \alpha_1<\alpha_2< \ldots <\alpha_k \le n$ such that $\pi_{\alpha_i} < \pi_{\alpha_j}$ if and only if $\sigma_i <\sigma_j$.  Otherwise, we say $\pi$ \emph{avoids} $\sigma$.
\end{definition}

\begin{example}  The permutation $\pi = 4127356$ contains $231$ since the $4,7,3$ appear in the same relative order as $2,3,1$.   However, $\pi$ avoids $321$ since there is no decreasing subsequence of length three in $\pi$.
\end{example}

A stack is a last-in first-out sorting device that utilizes push and pop operations.  In Volume $1$ of \emph{The Art of Computer Programming}~\cite{knuth:the-art-of-comp:1}, Knuth showed the permutation $\pi$ can be sorted (that is, by applying push and pop operations to the sequence $\pi_1,\dots,\pi_n$ one can output the identity $1,\dots,n$) if and only if $\pi$ avoids the permutation $231$.  Subsequently Tarjan~\cite{tarjan:sorting-using-n:}, Even and Itai~\cite{even:queues-stacks-a}, Pratt~\cite{pratt:computing-permu:}, and Knuth himself in \emph{Volume 3}~\cite{knuth:the-art-of-comp:3} studied sorting machines made up of multiple stacks in series or in parallel.

Classifying the permutations that are sortable by such a machine is one of the key areas of interest in this field.  To better do so, we will use the following definitions.

\begin{definition}
A \emph{permutation class} is a downset of permutations under the containment order.  Every permutation class can be specified by the set of minimal permutations which are \emph{not} in the class called its \emph{basis}.  For a set $B$ of permutations, we denote by $\Av(B)$ the class of permutations which do not contain any element of $B$.
\end{definition}

For example, Knuth's result says that the stack-sortable permutations are precisely $\Av(231)$, that is the basis for the stack sortable permutations is $\{231\}$.  Given most naturally defined sorting machines, the set of sortable permutations forms a class.  This is because often a subpermutation of a sortable permutation can be sorted by ignoring the operations corresponding to absent entries.\footnote{An exception is West's notion of $2$-stack-sortability~\cite{west:sorting-twice-t:}, which is due to restrictions on how the machine can use its two stacks.  Namely this machine prioritizes keeping large entries from being placed above small entries.  Because of this limitation, this machine can sort $35241$, but not its subpermutation $3241$.  This restriction was extended to pop stacks by Pudwell and the third author~\cite{pudwell:two-stack-pop} where again the sortable permutations do not form a class.}

Given that the permutations sortable by a single stack are precisely $\Av(231)$, one could reasonably expect the class of sortable permutations for a network made up of multiple stacks would also be finitely based.
However, this is not the case for machines made up of $k \ge 2$ stacks in series or in parallel, shown by Murphy~\cite{murphy:restricted-perm:} and Tarjan~\cite{tarjan:sorting-using-n:}, respectively.  Moreover, the exact enumeration question is unknown; see Albert, Atkinson, and Linton~\cite{albert:permutations-ge:} for the best known bounds. For a general overview of stack sorting, we refer the reader to the survey by B\'ona~\cite{Bona:a-survey-of-sta:}.

In part because of the difficulties noted above, numerous researchers have considered weaker machines.  Atkinson, Murphy, and Ru\v{s}kuc~\cite{atkinson:sorting-with-tw:} considered sorting with two {\it increasing\/} stacks in series, i.e., two stacks whose entries must be in increasing order when read from top to bottom.\footnote{Even without this restriction, the final stack must be increasing if the sorting is to be successful.}  They characterized the permutations this machine can sort with an infinite list of forbidden patterns, and also found the enumeration of these permutations.  Interestingly, these permutations are in bijection with the $1342$-avoiding permutations previously counted by B\'ona~\cite{Bona:exact-enumerati:}.  The third author~\cite{smith:a-decreasing-st:} studied a similarly restricted machine where the first stack must have entries in decreasing order when read from top to bottom.  This permutation class of sortable permutations was shown to be $\Av(3241,3142)$ which was proven to be enumerated by the Schr\"oder numbers by Kremer~\cite{kremer:permutations-wi:,kremer:postscript:-per:}.

A different version, sorting with a stack of depth $2$ followed by a standard stack (of infinite depth), was studied by Elder~\cite{elder:permutations-ge:}.  He characterized the sortable permutations as a class with a finite basis of forbidden patterns.  Later, Elder and Goh~\cite{elder:finite-stack} showed a machine with first stack of finite depth $d \ge 3$ followed by an infinite depth stack produces a sortable class of permutations with an infinite basis.    
Yet another restriction on two stacks in series was studied by the third author and Vatter~\cite{smith:stack-and-pop-stack} by combining a pop stack with a regular stack in series.  If a pop stack is followed directly by a stack, the sortable permutations are classified by a finite basis.  If there is a queue separating the two, the cardinality of the basis for the sortable permutations is unknown, but conjectured to be finite. 

In \cite{mansour:tier}, the authors apply a sorting algorithm on a stack whereby the entries the permutation are pushed into the stack in the usual way.  An entry is popped from the stack only if it is the next needed entry for the output (the next entry of the identity permutation).
That is, allow larger entries to be placed above smaller entries, but do not allow entries to be pushed to the output prematurely.  In particular, this means that if a permutation contains the pattern $231$, then there will be entries left in the stack after all legal moves have been made.  In this case, the algorithm is repeated on the remaining entries which are returned to the input to be read in their original order.

In this paper, we begin our sorting algorithm the same way as in \cite{mansour:tier} above.  That is, prioritize outputting appropriate entries even if this restriction causes larger entries to be placed above smaller entries in the stack. 
As before, if a permutation contains the pattern $231$, there will be entries left in the stack after all legal moves have been made.  However, the new algorithm returns the remaining entries in the stack to the input in the reverse of their prior order.  

\begin{definition} The \emph{rev-tier} of a permutation is the number of times the entries in the stack must be returned to the input.  Denote the rev-tier of the permutation $\sigma$ by $t_{\rev}(\sigma)$.  

Each repetition of the stack sorting algorithm will be referred to as a \emph{reverse pass}. 
\end{definition}

When sorting a permutation $\pi$ with rev-tier $t$, this machine can be considered to be a network of $t+1$ stacks in series with a special output condition. Namely, entries of $\pi$ may only exit a stack to traverse directly to the output or if there are no more entries left to enter the stack.  This restriction has a similar flavor to the pushall stacks studied by Pierrot and Rossin~\cite{pierrot:2-stack-pushall,pierrot:2-stack} where no entry is output until all entries have entered the stacks.

\begin{example}~\label{231}
The permutation $231$ has rev-tier $t_{\rev}(231)=1$, all other elements of $S_3$ have rev-tier $0$.  Alternatively, we can say all permutations in $S_3$ are $2$-reverse-pass sortable and permutations in $S_3$ except for $231$ are $1$-reverse-pass sortable.
\end{example}

We translate Knuth's original stack sortable requirement to the following theorem.

\begin{theorem}~\label{rev-tier_0}  (Knuth) A permutation $\pi$ has positive rev-tier, that is $\pi$ cannot be sorted via single reverse pass through the stack, if and only if $\pi$ contains the pattern $231$.
\end{theorem}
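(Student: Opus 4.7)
The plan is to reduce the statement to Knuth's classical theorem by recognizing that the first-pass behavior of the algorithm coincides with the standard greedy stack sort. Explicitly, the algorithm reads the input left-to-right and at each time step either (i) pops the top of the stack when it equals the next needed output value, or (ii) otherwise pushes the next input entry. This policy is deterministic, so the first reverse pass clears the stack exactly when this greedy stack-sort completely sorts $\pi$.

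For the ``only if'' direction, I would argue by contrapositive. Suppose $\pi$ avoids $231$. Then by Knuth's theorem there exists a sequence of push/pop operations that produces the identity, and a standard induction on $n$ shows that the greedy policy described above realizes one such successful sequence. (Write $\pi = \alpha\, 1\, \beta$ where $1$ is the minimum; since $\pi$ avoids $231$, every entry of $\alpha$ is smaller than every entry of $\beta$. By induction the greedy algorithm clears $\alpha$ into the output before reading the $1$, then outputs the $1$, then processes $\beta$ independently.) Hence the stack is empty at the end of the single pass, so $t_\rev(\pi) = 0$.

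For the ``if'' direction, assume $\pi$ contains an occurrence of $231$ at positions $i<j<k$ with $\pi_k<\pi_i<\pi_j$. I would show that $\pi_i$ can never be output during the first pass. When position $i$ is processed, the next needed output value is at most $\pi_k<\pi_i$, so $\pi_i$ is pushed rather than popped and comes to rest in the stack. When position $j$ is processed, the next needed output is still at most $\pi_k<\pi_i$ (since $\pi_k$ has not yet appeared), so the current stack top $\pi_i$ cannot be popped and $\pi_j$ is pushed directly on top of it. From this moment onward, $\pi_j$ sits above $\pi_i$ with $\pi_j>\pi_i$. For $\pi_i$ ever to be popped, $\pi_j$ would have to be popped first, which by the greedy rule requires $\pi_j$ to equal the next needed output value --- impossible as long as $\pi_i$ (a smaller unoutputted entry) remains. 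Therefore $\pi_i$ is still in the stack at the end of the first pass, which forces $t_\rev(\pi) \ge 1$.

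The only mildly delicate step is verifying that the greedy algorithm actually completes the sort on every $231$-avoider (not merely that some successful push/pop sequence exists), but this is a folklore fact whose inductive proof I sketched above; the rest is bookkeeping.
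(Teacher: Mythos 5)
The paper itself offers no proof of this theorem --- it is stated as a direct translation of Knuth's classical result and attributed to him --- so the only question is whether your self-contained argument is sound. Your reduction to the determinism of the greedy policy is the right framing, and your ``if'' direction is correct: once $\pi_i$ is pushed it cannot leave the stack until $\pi_k<\pi_i$ has been output, so $\pi_j>\pi_i$ comes to rest above it and the two entries deadlock. (A small imprecision: $\pi_i$ need not be the stack \emph{top} when $\pi_j$ arrives, since entries read between positions $i$ and $j$ may sit above it; what matters, and what is true, is only that $\pi_j$ ends up somewhere above $\pi_i$.)

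The ``only if'' direction, however, contains a genuine error. You decompose $\pi=\alpha\,1\,\beta$ around the \emph{minimum} and claim that $231$-avoidance forces every entry of $\alpha$ to be smaller than every entry of $\beta$, and that the greedy pass clears $\alpha$ before the $1$ is read. Both claims fail for $\pi=312$, which avoids $231$: here $\alpha=3$, $\beta=2$, so $\alpha$ contains an entry larger than an entry of $\beta$, and the greedy algorithm certainly cannot output the $3$ before reading the $1$. The correct structural lemma decomposes around the \emph{maximum}: writing $\pi=\alpha\,n\,\beta$, avoidance of $231$ forces every entry of $\alpha$ to be smaller than every entry of $\beta$ (if $a\in\alpha$, $b\in\beta$, $a>b$, then $a,n,b$ is a $231$), so $\alpha$ occupies the values $1,\dots,|\alpha|$, the greedy pass sorts and empties it by induction before $n$ is read, then $n$ sits at the bottom while $\beta$ is sorted on top of it. Alternatively, and perhaps more cleanly, one can show directly that on a $231$-avoider the greedy stack is always increasing from top to bottom --- if $b$ were pushed onto a smaller $a$, the as-yet-unread entry $c<a$ witnessing why $a$ is stuck would complete a $231$ pattern $a,b,c$ --- so the pass never deadlocks. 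Either repair makes your argument complete.
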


\section{Classes of $(t+1)$-reverse stack sortable permutations}

To investigate the rev-tiers of permutations more generally, we will derive an explicit condition on permutations that describes their rev-tier.

\begin{definition} Let $\sigma \in S_n$ and let $i \in \{1, 2, \ldots, n - 1\}$.  Call $(i, i + 1)$ a \emph{separated pair} in $\sigma$ if there is a $k > i + 1$ 
between $i$ and $i + 1$ in $\sigma$.

A separated pair  $(i, i + 1)$ is \emph{up oriented} if $i$ precedes $i + 1$ in $\sigma$, i.e. $(i,i+1)$ is a coversion.  Otherwise  $(i, i + 1)$ is \emph{down orientated}.
\end{definition}

Equivalently one could say that $(i + 1, i)$ is a down separated pair in $\sigma$ if $i$ and $i+1$ occur as part of a $231$ pattern where $i + 1$ is the middle number and $i$ is the smallest number in the pattern. Similarly, $(i, i + 1)$ is an up separated pair if $i$ and $i + 1$ occur as the $1$ and $2$ elements in a $132$ pattern.  

These types of patterns are known as \emph{covincular} patterns. 
The formal study of these patterns was introduced by Babson and Steingr{\'{\i}}msson~\cite{babson:generalized-per:}.  For a thorough review of such pattern avoidance we refer the reader to the survey~\cite{einar:survey} by Steingr{\'{\i}}msson and book~\cite{kitaev:book} by Kitaev.

In this context, Claesson~\cite{claesson:generalized-pat} proved a result equivalent to the following proposition:

\begin{proposition} (Claesson)
\label{sep_pair_prop}
A permutation $\pi$ contains the pattern $231$ if and only if $\pi$ has a down separated pair.  Similarly, $\pi$ contains the pattern $132$ if and only if $\pi$ contains an up separated pair.
\end{proposition}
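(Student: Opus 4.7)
The plan is to prove each biconditional in two steps: the ``separated pair implies pattern'' direction is essentially a restatement of the definitions, while the ``pattern implies separated pair'' direction requires a short argument that upgrades an arbitrary $231$ (respectively $132$) occurrence to one built from consecutive values $i,i+1$. For the easy directions, if $(i,i+1)$ is down separated with witness $k>i+1$ between them, then reading the values $i+1,k,i$ in positional order realizes a $231$ pattern since $i<i+1<k$; the up separated case yields a $132$ pattern identically.

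For the hard direction of the $231$ claim, suppose $\pi$ contains a $231$ pattern, so there are positions $p_1<p_2<p_3$ with $\pi_{p_1}=b$, $\pi_{p_2}=c$, $\pi_{p_3}=a$ and $a<b<c$. Let $L\subseteq\{1,\ldots,n\}$ be the set of values appearing at positions strictly less than $p_2$. Then $b\in L$ while $a\notin L$, so walking along the consecutive integers $a,a+1,\ldots,b$ there must be some index where membership in $L$ changes, giving $i\in\{a,\ldots,b-1\}$ with $i\notin L$ and $i+1\in L$. Consequently $i+1$ appears before position $p_2$ and $i$ appears after position $p_2$, while the value $c$ at position $p_2$ lies strictly between them and satisfies $c>b\ge i+1$. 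Hence $(i,i+1)$ is a down separated pair with witness $c$.

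The $132$ case is handled symmetrically: if $\pi_{p_1}=a$, $\pi_{p_2}=c$, $\pi_{p_3}=b$ with $a<b<c$, the same set $L$ now contains $a$ but not $b$, so some consecutive pair $i<i+1$ has $i\in L$ and $i+1\notin L$, and the value $c$ again serves as a witness bracketed between them, producing an up separated pair. The main obstacle is this hard direction, where one must promote a non-consecutive pattern to a consecutive-value witness; the key insight is that slicing the permutation at the middle position $p_2$ of the pattern confines the search to the value interval $[a,b]$, where any change across the boundary of $L$ automatically produces the desired separated pair.
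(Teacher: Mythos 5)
Your proof is correct and, at its core, takes the same approach as the paper: both arguments locate a consecutive value pair $(i,i+1)$ in the interval between the smallest and middle values of the pattern whose two entries straddle the position of the large element, the paper by iteratively incrementing the bottom value of the pattern and you by a one-shot boundary-crossing argument on the set $L$ of values appearing before that position. The witness pair produced is the same in both cases, so this is a repackaging rather than a genuinely different route.
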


\begin{proof}
The argument for the $132$ case is nearly identical to that given for the $231$ case in \cite{mansour:tier}.  However, it is presented here for completeness and future use in this paper.   

Suppose the permutation $\sigma$ contains a $132$ pattern, say a subsequence $(a,b,c)$ with $a < c < b$.  If $a + 1= c $, then $(a, c)$ is an up separated pair. Otherwise, consider the location of $a + 1$. If $a + 1$ is to the right of
$b$, then $(a, a+1)$ is an up separated pair. If instead $a + 1$ is to the left of $b$, then there is a $132$ pattern $(a + 1, b, c)$. Iterating this process will yield an up separated pair $(i, i + 1)$ with $a \le i \le c-1$.
\end{proof}

Extending this argument allows us to characterize the rev-tier of a permutation using separated pairs.

\begin{theorem}
The rev-tier of a permutation $\pi$ under this sorting algorithm is exactly the maximum number $t$ of separated pairs $(i_i, i_1 + 1), (i_2,  i_2 +1),\ldots,(i_t, i_t+1) \in \pi$ where $i_1 < i_2 < i_3 <\ldots< i_t$ and the orientations of the pairs alternate (in this order), beginning with a down separated pair.
\end{theorem}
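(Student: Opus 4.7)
The plan is to induct on $L_{\downarrow}(\pi)$, the length of a longest alternating chain of separated pairs in $\pi$ starting with a down pair; the base case $L_{\downarrow}(\pi) = 0$ is immediate from Proposition~\ref{sep_pair_prop} combined with Theorem~\ref{rev-tier_0}, since having no down separated pair means $\pi$ avoids $231$ and so has rev-tier $0$. For the inductive step I would pin down exactly what a single pass accomplishes and then show that each pass drops $L_{\downarrow}$ by exactly one.

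For the single-pass analysis, let $m$ be the smallest index $j \in \{1, \dots, n-1\}$ for which $(j, j+1)$ is a down separated pair of $\pi$, setting $m = n$ if none exists. I claim the first pass outputs exactly $1, 2, \dots, m$ and leaves the entries $m+1, \dots, n$ in the stack in the same relative order in which they appear in $\pi$. The ``at most $m$'' direction is immediate: by the definition of down separated pair some $k > m+1$ is pushed on top of $m+1$, and since the target never exceeds $m+1$ during this pass, $k$ is never popped and $m+1$ stays trapped beneath it. The ``at least $m$'' direction proceeds by induction on $j \leq m$: if some $j \leq m$ gets stuck under a value $k > j$, a short case analysis on whether $j-1$ is read before or after $j$ (and whether $j-1$ is itself stuck below $j$) produces a down separated pair $(j', j'+1)$ with $j' < m$, contradicting the minimality of $m$.

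Next I would translate this to the new input $\sigma'$ obtained by reversing the stack. Since $\sigma'$ consists of the entries of $\pi$ that are $\geq m+1$, listed in the reverse of their order in $\pi$, for any $i \geq m+1$ the pair $(i, i+1)$ is separated in $\sigma'$ iff it is separated in $\pi$ (any witness $k > i+1 \geq m+2$ is still in $\sigma'$ and still lies between $i$ and $i+1$), and the reversal flips its orientation. Consequently the maximum length $L_{\downarrow}(\sigma')$ of an alternating chain starting with down in $\sigma'$ equals the maximum length $L_{\uparrow}^{\geq m+1}(\pi)$ of an alternating chain in $\pi$ that starts with up and uses only indices $\geq m+1$.

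The combinatorial heart of the argument is the identity
\[
L_{\downarrow}(\pi) \;=\; L_{\uparrow}^{\geq m+1}(\pi) + 1.
\]
The $\geq$ direction follows by prepending $(m, m+1)$ to any witnessing up-chain. For $\leq$, split on whether an optimal down-chain begins at $i_1 = m$ or at $i_1 > m$; the latter subcase uses the elementary inequality $L_{\downarrow} \leq L_{\uparrow} + 1$ obtained by deleting the leading pair. Combining this identity with $L_{\downarrow}(\sigma') = L_{\uparrow}^{\geq m+1}(\pi)$ gives $L_{\downarrow}(\sigma') = L_{\downarrow}(\pi) - 1$, so the inductive hypothesis yields $t_{\rev}(\pi) = 1 + t_{\rev}(\sigma') = L_{\downarrow}(\pi)$, as required. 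The main obstacle I anticipate is the ``at least $m$'' part of the single-pass analysis: showing that no $j \leq m$ can remain trapped requires tracking the interplay of the stack, the unread suffix of $\pi$, and the current target, and ultimately exhibiting a strictly smaller down separated pair whenever $j$ is stuck; once this is in hand, the flipping-of-orientation step and the chain-length identity are routine.
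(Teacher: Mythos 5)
Your proposal is correct and follows essentially the same route as the paper's proof: analyze a single pass to show the output stops exactly at the smallest down separated pair, observe that returning the stack contents reverses the remaining entries and hence flips the orientation of every surviving separated pair, and iterate. You supply considerably more detail than the paper does (the ``at least $m$'' trapping argument and the chain-length identity $L_{\downarrow}(\pi)=L_{\uparrow}^{\ge m+1}(\pi)+1$ are both glossed over there), and your sketched case analysis via the location of $j-1$ does complete correctly.
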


\begin{proof}
Let $\pi$ be a permutation.  From Theorem~\ref{rev-tier_0} and Proposition~\ref{sep_pair_prop}, $\pi$ has rev-tier $0$ if and only if $\pi$ does not contain a down separated pair. If $\pi$ contains at least one down separated pair, let $(i_1, i_1 + 1)$ be the smallest such pair. The sorting algorithm will output $12\ldots i_1$, and then the remaining entries will be processed in the reverse of their original order.  This second pass stops short of outputting the identity and instead results in a total output of $12\ldots i_2$ precisely if this new sequence contains a down separated pair $(i_2,i_2 +1)$ which was an up separated pair in $\pi$.  Each increase in rev-tier is caused by a separated pair with the opposite orientation in $\pi$ as the previous one. The theorem follows.
\end{proof}

\begin{example}  The permutation $\pi=2413$ has two alternating separated pairs, $(1,2),(2,3)$ with $(1,2)$ down separated and thus has rev-tier $2$.  We show the sorting of $\pi$ using three  reverse passes through a stack in Figure~\ref{fig_2413}.  In Figure~\ref{fig_2413_series}, the
 equivalent sorting of $\pi$ is shown with three stacks in series with the restriction of not permitting an entry $\pi_i$ to leave a stack until one of the following conditions are met:
\begin{enumerate}
\item{ $\pi_i$ is the next entry of the identity permutation for the output or}
\item{ there are no more entries to the right of the stack.}
\end{enumerate}
\end{example}

\begin{figure}[t]
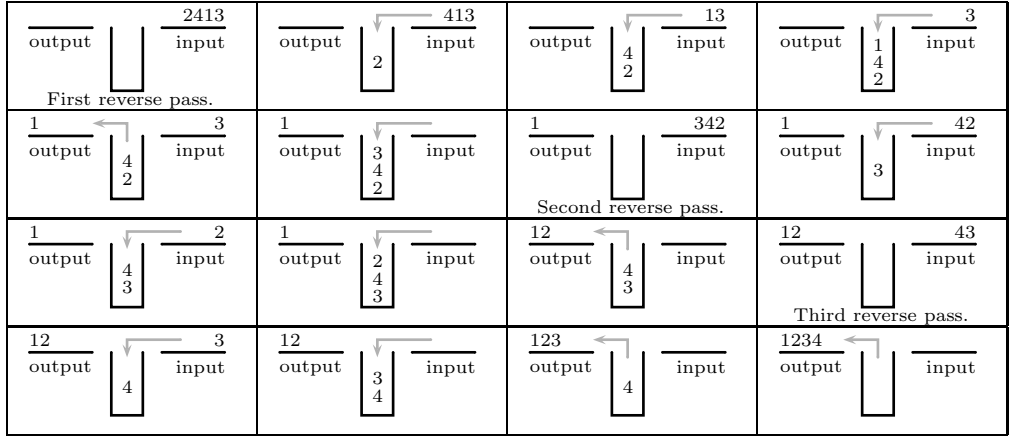

\begin{center}

\begin{tabular}{|c|c|c|c|}
\hline
\begin{onestack}
\fillstack{}{}{2413}
\firstpass
\end{onestack}
&
\begin{onestack}
\fillstack{}{2}{413}
\stackinput
\end{onestack}
&
\begin{onestack}
\fillstack{}{24}{13}
\stackinput
\end{onestack}
&
\begin{onestack}
\fillstack{}{241}{3}
\stackinput
\end{onestack}
\\\hline
\begin{onestack}
\fillstack{1}{24}{3}
\stackoutput
\end{onestack}
&
\begin{onestack}
\fillstack{1}{243}{}
\stackinput
\end{onestack}
&
\begin{onestack}
\fillstack{1}{}{342}
\secondpass
\end{onestack}
&
\begin{onestack}
\fillstack{1}{3}{42}
\stackinput
\end{onestack}
\\\hline
\begin{onestack}
\fillstack{1}{34}{2}
\stackinput
\end{onestack}
&
\begin{onestack}
\fillstack{1}{342}{}
\stackinput
\end{onestack}
&
\begin{onestack}
\fillstack{12}{34}{}
\stackoutput
\end{onestack}
&
\begin{onestack}
\fillstack{12}{}{43}
\thirdpass
\end{onestack}
\\\hline
\begin{onestack}
\fillstack{12}{4}{3}
\stackinput
\end{onestack}
&
\begin{onestack}
\fillstack{12}{43}{}
\stackinput
\end{onestack}
&
\begin{onestack}
\fillstack{123}{4}{}
\stackoutput
\end{onestack}
&
\begin{onestack}
\fillstack{1234}{}{}
\stackoutput
\end{onestack}
\\\hline
\end{tabular}

\caption{Sorting the permutation $2413$ with $k=3$ reverse-passes through a stack.}
\label{fig_2413}
\end{center}
\end{figure}

%
%

\begin{figure}[t]
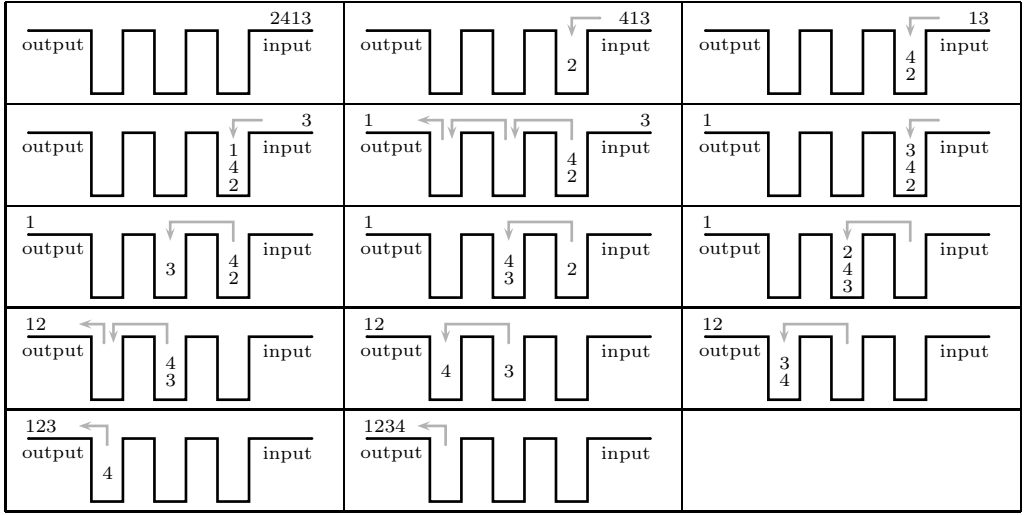

\begin{center}

\begin{tabular}{|c|c|c|}
\hline
\begin{threestacks}
\fillstackseries{}{}{}{}{2413}
\end{threestacks}
&
\begin{threestacks}
\fillstackseries{}{}{}{2}{413}
\stackinputseries
\end{threestacks}
&
\begin{threestacks}
\fillstackseries{}{}{}{24}{13}
\stackinputseries
\end{threestacks}
\\\hline
\begin{threestacks}
\fillstackseries{}{}{}{241}{3}
\stackinputseries
\end{threestacks}
&
\begin{threestacks}
\fillstackseries{1}{}{}{24}{3}
\stacktransferandoutput
\end{threestacks}
&
\begin{threestacks}
\fillstackseries{1}{}{}{243}{}
\stackinputseries
\end{threestacks}
\\\hline
\begin{threestacks}
\fillstackseries{1}{}{3}{24}{}
\stacktransfer
\end{threestacks}
&
\begin{threestacks}
\fillstackseries{1}{}{34}{2}{}
\stacktransfer
\end{threestacks}
&
\begin{threestacks}
\fillstackseries{1}{}{342}{}{}
\stacktransfer
\end{threestacks}
\\\hline
\begin{threestacks}
\fillstackseries{12}{}{34}{}{}
\stacktransfertwoandoutput
\end{threestacks}
&
\begin{threestacks}
\fillstackseries{12}{4}{3}{}{}
\stacktransfertwo
\end{threestacks}
&
\begin{threestacks}
\fillstackseries{12}{43}{}{}{}
\stacktransfertwo
\end{threestacks}
\\\hline
\begin{threestacks}
\fillstackseries{123}{4}{}{}{}
\stackoutputseries
\end{threestacks}
&
\begin{threestacks}
\fillstackseries{1234}{}{}{}{}
\stackoutputseries
\end{threestacks}
&
\\\hline
\end{tabular}

\caption{Sorting the permutation $2413$ with $k=3$ stacks in series with output restrictions.}
\label{fig_2413_series}
\end{center}
\end{figure}

We now argue that permutations up to a given rev-tier $t$ form a permutation class.

\begin{proposition}~\label{contain}
If $\sigma$ and $\tau$ are two permutations and $\sigma$ is contained in $\tau$ then $t_{\rev}(\tau) \ge$ $t_{\rev}(\sigma)$.
\end{proposition}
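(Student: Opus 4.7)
The plan is to use the just-proved theorem characterizing $t_{\rev}(\pi)$ as the maximum length of an alternating chain of separated pairs starting with a down pair, and to lift any such chain from $\sigma$ to one of the same length in $\tau$. The containment $\sigma\hookrightarrow\tau$ supplies an order-preserving injection $\phi$ from the values of $\sigma$ into those of $\tau$ that preserves relative positions as well.

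Given an alternating chain of separated pairs $(i_1,i_1+1),\ldots,(i_t,i_t+1)$ in $\sigma$ with $i_1<\cdots<i_t$, I would first choose a witness $k_j>i_j+1$ for each pair. The triple $\{i_j,i_j+1,k_j\}$ forms a $231$ pattern (in the down case) or a $132$ pattern (in the up case) in $\sigma$, and under $\phi$ it becomes a pattern of the same type in $\tau$ on the values $\phi(i_j)<\phi(i_j+1)<\phi(k_j)$. I would then rerun the inductive argument from the proof of Proposition~\ref{sep_pair_prop} on this pattern, but carried out \emph{inside $\tau$}. The point is that careful bookkeeping on which value is replaced at each step of that iteration shows it can always be made to terminate with a separated pair $(e_j,e_j+1)$ of the correct orientation satisfying
\[
    \phi(i_j)\;\le\;e_j\;\le\;\phi(i_j+1)-1.
\]

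With this bound in hand, the lifted sequence $(e_1,e_1+1),\ldots,(e_t,e_t+1)$ alternates in orientation by construction and is strictly increasing in its first coordinate: since $i_j+1\le i_{j+1}$ and $\phi$ is increasing,
\[
    e_j\;\le\;\phi(i_j+1)-1\;<\;\phi(i_{j+1})\;\le\;e_{j+1}.
\]
Applying the rev-tier theorem to $\tau$ then gives $t_{\rev}(\tau)\ge t=t_{\rev}(\sigma)$. The main obstacle is the upper bound $e_j\le\phi(i_j+1)-1$: a priori the Claesson-style iteration inside $\tau$ could walk off to entries of arbitrarily large value, which would destroy strict monotonicity of the $e_j$. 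Verifying that in every case of the iteration the derived separated pair actually sits inside the interval $[\phi(i_j),\phi(i_j+1)]$ is the single nontrivial computation; once it is in hand, the rest of the argument is routine.
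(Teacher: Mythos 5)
Your proposal is correct and follows essentially the same route as the paper: embed the witnessing $231$/$132$ triples of each separated pair into $\tau$, invoke the iteration from the proof of Proposition~\ref{sep_pair_prop} to produce a separated pair of the same orientation lying in the value interval between the images of $i_j$ and $i_j+1$, and use that interval bound to get strict monotonicity of the lifted chain. The ``single nontrivial computation'' you flag---that the iteration terminates inside $[\phi(i_j),\phi(i_j+1)-1]$---is exactly the bound $a\le i\le c-1$ already recorded in the paper's proof of Proposition~\ref{sep_pair_prop}, so no new work is needed there.
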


\begin{proof}
Suppose $\sigma$ has a maximal  alternating down/up sequence of separated pairs $(i_1, i_1 + 1), (i_2, i_2 + 1),\ldots, (i_t, i_t + 1)$ such that $i_1 < i_2 < \ldots < i_t$. 
The permutation pattern formed by the alternating sequence of separated pairs $(i_1, i_1 + 1), (i_2, i_2 + 1),\ldots, (i_t, i_t + 1)$ in $\sigma$ must be contained in $\tau$.  

Say a permutation pattern corresponding to the separated pair $(i_k,i_k+1)$ with a separator is \\ $\tau_{\alpha_k} \tau_{\beta_k} \tau_{\alpha_{k+1}}$.  Applying the argument given in the proof of Proposition~\ref{sep_pair_prop}, there exists a separated pair $(j_k,j_k+1)$ in $\tau$ (of the same orientation as that of $(i_k,i_k +1)$ in $\sigma$) where $ \tau_{\alpha_k} \leq j_k \leq \tau_{\alpha_{k+1}}-1$.  Hence $\tau$ has an alternating down/up sequence of separated pairs $(j_1, j_1 + 1), (j_2, j_2 + 1),\ldots, (j_t, j_t + 1)$ with $j_1 < j_2 < \ldots < j_t$.  Therefore $t_{\rev}(\tau) \ge$ $t_{\rev}(\sigma)$.
\end{proof}

Since the permutations of rev-tier at most $t$ are those that avoid all permutations of rev-tier $t+1$, the following corollary is a direct consequence of Proposition~\ref{contain}.

\begin{corollary}
The $(t+1)$-reverse pass sortable permutations form a permutation class for any $t \geq 0$.
\end{corollary}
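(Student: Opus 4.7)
The plan is to verify the downset property that defines a permutation class. Recall from Section~\ref{intro} that a permutation class is a downset of permutations under the containment order, so it suffices to show that whenever $\tau$ is $(t+1)$-reverse pass sortable and $\sigma$ is any permutation contained in $\tau$, then $\sigma$ is also $(t+1)$-reverse pass sortable.

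First I would translate the sortability condition into a rev-tier inequality: a permutation $\pi$ is $(t+1)$-reverse pass sortable precisely when $t_{\rev}(\pi) \leq t$, since the rev-tier counts the number of additional passes beyond the first one that are needed to complete sorting. Then I would invoke Proposition~\ref{contain} directly. Given $\sigma$ contained in $\tau$ with $t_{\rev}(\tau) \leq t$, the proposition yields $t_{\rev}(\sigma) \leq t_{\rev}(\tau) \leq t$, so $\sigma$ is likewise $(t+1)$-reverse pass sortable. This establishes closure under taking subpermutations.

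There is no substantive obstacle here because Proposition~\ref{contain} already carries out the nontrivial work, namely lifting an alternating down/up sequence of separated pairs from $\sigma$ to a sequence of at least the same length in $\tau$. The corollary is essentially a repackaging of that proposition in the language of permutation classes, combined with the definitional observation that being $(t+1)$-reverse pass sortable is synonymous with having rev-tier at most $t$. In particular, the class in question is exactly $\Av(B_{t+1})$, where $B_{t+1}$ denotes the set of minimal permutations of rev-tier $t+1$ under containment.
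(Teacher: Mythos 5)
Your proof is correct and follows the same route as the paper: the paper likewise observes that $(t+1)$-reverse pass sortability is the same as rev-tier at most $t$ and deduces the downset property directly from Proposition~\ref{contain}. (One small notational quibble: in the paper's convention the class would be $\Av(B_t)$, since $B_t$ there denotes the basis for the permutations of rev-tier at most $t$, but this does not affect your argument.)
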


\subsection{An explicit basis for $2$-reverse pass sortable permutations}

Recall the \emph{basis} of a permutation class is the minimal set of forbidden permutations any permutation in the class must avoid.

\begin{notation}
Let $B_t$ be the basis for the set of all permutations of rev-tier at most $t$.  
\end{notation}

\begin{theorem}~\label{tier_1}
The $2$-reverse pass sortable permutations are precisely $\Av(2413, 2431,23154)$.
\end{theorem}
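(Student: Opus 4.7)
The plan is to establish the two inclusions. For the forward direction, I would verify that each of $2413$, $2431$, $23154$ has $t_{\rev}=2$ by exhibiting an alternating (down, then up) sequence of separated pairs with strictly increasing smaller entries, then invoking the preceding theorem characterizing rev-tier by separated pairs. Proposition~\ref{contain} then shows any permutation containing one of these three has rev-tier at least $2$, hence is not $2$-reverse pass sortable. The minimality of each element of the putative basis, and the fact that no one of these permutations is a pattern of another, is a short finite check on proper subpermutations.

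For the converse, suppose $t_{\rev}(\pi)\ge 2$. I would apply the characterization theorem to fix $i_1<i_2$ with $(i_1,i_1+1)$ down separated and $(i_2,i_2+1)$ up separated in $\pi$. Set $a=i_1$, $b=i_1+1$, $c=i_2$, $d=i_2+1$, and let $P_x$ denote the position of $x$ in $\pi$. The hypotheses give $P_b<P_a$ with some value exceeding $b$ at a position in $(P_b,P_a)$, and $P_c<P_d$ with some value exceeding $d$ at a position in $(P_c,P_d)$. I would then split on whether $i_2=i_1+1$ (so the two pairs share the value $b$ and only three distinct values are involved) or $i_2>i_1+1$ (four distinct values $a<b<c<d$).

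Case~1 is short: the shared value $b$ must precede both $a$ and $c$. If $c$ precedes $a$ in $\pi$, then $c>b$ is itself a separator for $(a,b)$, and combined with any value $K>c$ between $b$ and $c$ the subsequence at positions $(P_b,P_K,P_c,P_a)$ realizes $2431$. If $a$ precedes $c$, then any value $K>b$ between $b$ and $a$ must differ from $c$ and hence exceed $c$, so it serves as separator for both pairs and $(P_b,P_K,P_a,P_c)$ realizes $2413$. In Case~2 I would case-split first on whether $c\in(P_b,P_a)$. If yes, then $c$ is itself a separator for $(a,b)$, and combined with an external separator $K>d$ in $(P_c,P_d)$ the subsequence $(P_b,P_c,P_a,P_K,P_d)$ directly realizes $23154$ (minor placements of $K$ or $d$ give $23514$ or $23541$, each of which contains a basis element as a proper subpattern). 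If $c\notin(P_b,P_a)$, there are three sub-configurations according to where $c$ and $d$ lie relative to the interval $(P_b,P_a)$, each requiring further branching on whether the external separator $K_1>b$ for the first pair satisfies $b<K_1<c$ or $K_1>d$, and on the placement of $K_2>d$. In every resulting sub-sub-case one exhibits an explicit four- or five-index subsequence of pattern $2413$, $2431$, or $23154$.

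The main obstacle is the volume of bookkeeping in Case~2 with $c\notin(P_b,P_a)$: there are roughly a dozen sub-sub-cases, each resolved by listing the relative position sequence, sorting the values, and pointing to four or five indices realizing a basis pattern. No individual sub-case is hard—the $2413$ (respectively $23154$) pattern is essentially forced by pairing one of the external separators with the $b,a,c,d$ structure—but checking them exhaustively requires care to avoid missed configurations.
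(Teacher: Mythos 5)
Your proposal is correct in outline but takes a genuinely different route from the paper, and the part you defer is where essentially all of the remaining work lives. For the converse, the paper does not analyze an arbitrary permutation of rev-tier at least $2$; it analyzes a \emph{minimal} one, i.e.\ a basis element of $B_1$. Minimality forces every entry to participate in the maximal alternating sequence of separated pairs (as a pair member or as a separator), which pins the down pair to $(1,2)$ and leaves only three ways the value $3$ can be used ($3$ separates $(1,2)$, or $(2,3)$ is the up pair, or $(3,4)$ is the up pair); each resolves in a few lines, and the same analysis simultaneously certifies that no basis element other than $2413,2431,23154$ exists. You instead extract the up-to-six witnessing entries $a,b,k,c,d,K$ from an arbitrary rev-tier-$\ge 2$ permutation and hunt for a forbidden pattern among them. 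This does work: by the argument of Proposition~\ref{sep_pair_prop}, the restriction of $\pi$ to those entries is itself a permutation of length at most $6$ with a down separated pair below an up separated pair, hence of rev-tier at least $2$, so a forbidden pattern must appear among the witnesses. But because $c$ and $d$ need not be close to $a$ and $b$ in value or position, you inherit the dozen-odd sub-sub-cases you describe, and they are not optional bookkeeping --- they \emph{are} the proof of the converse, so the proposal is incomplete until they are written out (or replaced by an explicit finite check of the length-$\le 6$ configurations). Two smaller points: in your Case~1 you repeatedly write $c$ where you mean $d$ (with $b=c$ the shared value, the relevant dichotomy is whether $d$ precedes $a$), though the patterns $2431$ and $2413$ you extract are correct once the symbols are fixed; and your first sub-case of Case~2, including the $23514$ and $23541$ degenerations, is complete and correct. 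If you want to keep your framework while avoiding the case explosion, borrow the paper's reduction: it suffices to treat a minimal counterexample, and minimality lets you take $(i_1,i_1+1)=(1,2)$ and $i_2\in\{2,3\}$ (or $3$ as a separator), which collapses your Case~2 almost entirely.
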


\begin{proof}
One can verify that each of $2413, 2431,$ and $23154$ have rev-tier $2$, and that the deletion of any entry reduces the rev-tier of the permutation.  Hence $2413, 2431,23154 \in B_1$.

Now suppose $\pi$ is a basis element and thus of minimal length.  Each entry of $\pi$ must appear as part of a separated pair in the maximum length alternating down/up sequence of separated pairs in $\pi$.  Thus $(1,2)$ must be a down separated pair in $\pi$.  Similarly, since the $3$ is part of a separated pair, $\pi$ must have at least one of the following properties: 
 \begin{enumerate}
 \item{$3$ separates $(1,2)$.}
 \item{$(2,3)$ is an up separated pair.}
 \item{$(3,4)$ is an up separated pair.}
 \end{enumerate}
 
If $3$ separates $(1,2)$ and does not appear as part of a separated pair, then $(4,5)$ must be an up separated pair. 
Thus $\pi$ contains the subsequence $465$.  Also, each  of $4,5,6$ must appear either before or after the entire subsequence $231$, as otherwise deleting $3$ would result in a smaller permutation of rev-tier $2$.  However, each such configuration results in a permutation containing one of the known basis permutations $2413, 2431, 23154$.

If instead $(2,3)$ is an up separated pair, then since $(1,2)$ is a down separated pair, $\pi$ contains either the subsequence $213$ or the subsequence $231$.  Separating the first two terms of the sequence (as required) also separates the other pair and leads to $2413$ or $2431$ respectively.

Finally if $(3,4)$ is an up separated pair, then there is a $3k4$ subsequence in $\pi$ where $k>4$.  If none of these entries separate the subsequence $21$, the $\pi$ has form $2m13k4$ or $3k42m1$ where $m,k >4$.  However, the first four terms of those permutations are subpatterns $2413$ or $2431$ respectively.  Hence $\pi$ is made up of the subsequences $354$ and $21$ where at least some of the $354$ subsequence separates $21$.   The only such permutation of this form that does not contain $2413$ or $2431$ as a subpermutation is $23154$.
\end{proof}

We also were able to compute the basis for the class of $3$-reverse pass sortable permutations: 
\[
B_2= \{24153, 24513, 24531, 42513, 42531, 231564, 261453, 523164, 562413, 562431, 6723154\}.
\]

Further calculations show $B_3$ has $16$ elements of length $6$, $24$ of length $7$, $11$ of length $8$, and one of length $9$.

\subsection{Bounds on the basis elements for $(t+1)$-pass sortable permutations}

We now show there is a finite basis for each class by bounding the length of potential basis elements. 

\begin{proposition}
If $\sigma \in B_t$, then the length of $\sigma$ is at most $3(t+1)$.
\end{proposition}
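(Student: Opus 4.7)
The plan is to exhibit a subpermutation of $\sigma$ of length at most $3(t+1)$ that already realizes rev-tier $t+1$, and then use the minimality inherent in being a basis element to force this subpermutation to equal all of $\sigma$. Since $\sigma\in B_t$, its rev-tier is at least $t+1$, so by the theorem characterizing rev-tier, $\sigma$ contains an alternating down/up sequence of separated pairs $(i_1,i_1+1),(i_2,i_2+1),\ldots,(i_{t+1},i_{t+1}+1)$ with $i_1<i_2<\cdots<i_{t+1}$. For each $k$ I would pick one particular separator $s_k$, namely an entry lying between $i_k$ and $i_k+1$ in $\sigma$ with value exceeding $i_k+1$; such an $s_k$ exists by the definition of separated pair. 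Letting $V$ denote the set of values $\{i_k,i_k+1,s_k : 1\le k\le t+1\}$, we have $|V|\le 3(t+1)$, and I would let $\tau$ be the subpermutation of $\sigma$ induced by the entries whose values lie in $V$.

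The core technical step is verifying that $\tau$ still has rev-tier at least $t+1$. The point is that the two values $i_k$ and $i_k+1$ are consecutive integers, so no value of $\sigma$ lies strictly between them, and hence after standardization of $\tau$ they map to a pair $(j_k,j_k+1)$ of consecutive values in $\tau$. The separator $s_k$ satisfies $s_k>i_k+1$, so its rank inside $V$ is strictly greater than that of $i_k+1$, meaning it remains a valid separator of $(j_k,j_k+1)$ in $\tau$. The relative ordering of the pairs and their up/down orientations are preserved by passing to the subpermutation, so the theorem gives $t_{\rev}(\tau)\ge t+1$.

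Finally, since $\sigma\in B_t$, any proper subpermutation of $\sigma$ must avoid every element of $B_t$ (no other basis element can embed into a proper subpermutation of $\sigma$ without violating the minimality of $\sigma$ in $B_t$, and $\sigma$ itself cannot embed into a strictly shorter permutation), and therefore has rev-tier at most $t$. But $\tau$ has rev-tier at least $t+1$, so $\tau$ cannot be a proper subpermutation, forcing $\tau=\sigma$ and $|\sigma|=|\tau|\le 3(t+1)$. The step requiring the most care is the standardization check in the middle paragraph: one must confirm that nothing in the definition of an alternating separated pair sequence, in particular the consecutive-value requirement, is lost when entries of $\sigma$ outside $V$ are discarded. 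This is clean rather than difficult precisely because each pair uses consecutive integers and no intermediate value can intrude, but the verification is the hinge of the argument.
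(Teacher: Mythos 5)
Your proposal is correct and is essentially the paper's own argument, just carried out in full detail: the paper likewise deletes all entries not serving as a pair element or separator in a maximal alternating sequence of $t+1$ separated pairs, notes the rev-tier survives, and invokes minimality of basis elements. Your standardization check (consecutive values stay consecutive, separators stay larger and in between) is exactly the verification the paper leaves implicit.
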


\begin{proof}
If a permutation contains entries that are not part of an oriented separated pair contributing to the rev-tier, these can be eliminated without reducing the rev-tier.  A basis permutation in $B_t$ must have exactly $t+1$ such separated pairs and thus has length at most $3(t+1)$.  
\end{proof}

As an immediate consequence, we obtain the desired corollary.

\begin{corollary}  The class of $(t+1)$-reverse-pass sortable permutations has a finite basis.
\end{corollary}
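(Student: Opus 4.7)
The plan is to read off this corollary as an immediate consequence of the preceding proposition, so essentially no new ideas are required; the work has already been done in bounding the length of basis elements. Concretely, I would observe that the preceding proposition says every $\sigma \in B_t$ satisfies $|\sigma| \le 3(t+1)$, and then note that the set of all permutations of length at most $3(t+1)$ is finite, with cardinality at most $\sum_{n=1}^{3(t+1)} n!$. Since $B_t$ is a subset of this finite set, $B_t$ itself is finite, which is exactly the statement that the class of $(t+1)$-reverse-pass sortable permutations is finitely based.

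There is really no obstacle here; the only thing to double-check is that we are applying the right proposition to the right class. The class in question is $\Av(B_t)$, which by Proposition~\ref{contain} and the definition of $B_t$ consists exactly of permutations of rev-tier at most $t$, i.e., the $(t+1)$-reverse-pass sortable permutations. So the finiteness of $B_t$ is precisely the finiteness of the basis of this class.

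If I wanted to squeeze slightly more content into the write-up, I would also record the explicit numerical bound $|B_t| \le \sum_{n=1}^{3(t+1)} n!$ to emphasize that this gives a (crude) algorithm for computing $B_t$: enumerate all permutations up to length $3(t+1)$, check rev-tier by the separated-pair characterization in the preceding theorem, and keep the minimal ones of rev-tier exactly $t+1$. This is consistent with how $B_2$ and the partial data for $B_3$ were produced earlier in the section.
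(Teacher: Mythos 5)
Your proof is correct and matches the paper's reasoning exactly: the paper also derives this corollary as an immediate consequence of the proposition bounding the length of elements of $B_t$ by $3(t+1)$, so that $B_t$ sits inside the finite set of permutations of length at most $3(t+1)$. The extra remarks about the explicit cardinality bound and the resulting brute-force computation of $B_t$ are consistent with how the paper computes $B_2$ and $B_3$, but add nothing essential to the argument.
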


We can also determine the length of the shortest elements in the basis by giving a construction of a permutation of length $n$ with maximal rev-tier.

\begin{notation} Let $\rho(n)$ represent the maximum rev-tier of any permutation of length $n$, and as before let $t_{\rev}(\sigma)$ represent the rev-tier of $\sigma$.
\end{notation}

\begin{proposition}~\label{exact_rho}
For any integer $n \ge 2$, we have $\rho(n) = n-2$.  Specifically, there are permutation(s) of length $n \geq 2$ and rev-tier $\rho(n)=n-2$ which have $(1,2), (2,3),\ldots, (n-2,n-1)$ as its maximum length alternating down/up sequence of separated pairs.
\end{proposition}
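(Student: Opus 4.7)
The plan is to prove $\rho(n)=n-2$ by matching an upper bound with an explicit construction. The upper bound comes directly from the characterization of rev-tier already established in the excerpt: any alternating sequence of separated pairs in a length-$n$ permutation takes the form $(i_1,i_1+1),\ldots,(i_t,i_t+1)$ with $1\le i_1<\cdots<i_t\le n-1$, and since separation of $(j,j+1)$ requires a strictly larger value to sit between $j$ and $j+1$, the pair $(n-1,n)$ can never be separated. Hence $i_t\le n-2$ and $t\le n-2$, so $\rho(n)\le n-2$.

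For the lower bound I would define $\pi_n$ by listing the even values in increasing order followed by the odd values in decreasing order; for instance $\pi_4=2431$, $\pi_5=24531$, $\pi_6=246531$. The crucial feature is that the maximum value $n$ occupies a central position: position $n/2$ when $n$ is even (the end of the initial increasing run) and position $(n+1)/2$ when $n$ is odd (the singleton pivot between the evens and odds). I would then check that for every $i\in\{1,\ldots,n-2\}$ the values $i$ and $i+1$ lie on opposite sides of $n$ in $\pi_n$, so $n$ witnesses $(i,i+1)$ as a separated pair. The orientation is forced by the parity of $i$: when $i$ is odd the even value $i+1$ appears in the increasing prefix before $i$ (which lies in the decreasing suffix), giving a down-oriented pair; when $i$ is even the roles reverse and the pair is up-oriented. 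This produces an alternating down/up sequence of length $n-2$ starting with a down pair, matching the upper bound, and by the theorem characterizing rev-tier we conclude $t_{\rev}(\pi_n)=n-2$.

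The main obstacle is really just choosing the construction; once $\pi_n$ is in hand the verification is a bookkeeping exercise tracking positions. A minor point of care is the top pair $(n-2,n-1)$, which has very little room for a separator, so one must verify that the maximum $n$ really is strictly between the positions of $n-2$ and $n-1$ in $\pi_n$. The parity-driven layout is engineered so that the sole candidate separator $n$ always falls between the two members of every pair $(i,i+1)$ with $i\le n-2$, and this single observation handles both the separation and the alternation conditions uniformly across even and odd $n$.
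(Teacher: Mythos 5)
Your proof is correct and follows essentially the same route as the paper: the upper bound comes from observing that $(n-1,n)$ can never be separated, and the lower bound from an explicit permutation in which the maximum value $n$ sits between all the even values on one side and all the odd values on the other, so that $n$ separates every pair $(i,i+1)$ with $i\le n-2$ and parity forces the alternating down/up orientations. Your witness (evens increasing, then odds decreasing) is a minor variant of the paper's (evens decreasing, then $n$, then odds increasing), but the structural idea and verification are the same.
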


\begin{proof}
First note a permutation $\pi$ of length $n$ cannot have more separated pairs of any orientation beyond $(1,2), (2,3),\ldots, (n-2,n-1)$, so $\rho(n) \leq n-2$.  Further, one construction of a permutation of length $n$ with rev-tier $n-2$ is 
\[ 
\pi = 
\begin{cases}
(n-2)  \cdots 6 \: 4 \: 2 \: n \:1 \: 3 \:5 \cdots (n-1) & \text{if } n \text{ is even} \\
(n-1)  \cdots 6 \: 4 \: 2 \: n \:1 \: 3 \:5 \cdots (n-2) & \text{if } n \text{ is odd}. \\
\end{cases}
\]
\end{proof}

As an immediate consequence of Proposition~\ref{exact_rho}, we obtain the following corollary.

\begin{corollary}  
For any non-negative integer $t$ and any $\sigma \in B_t$, the length of $\sigma$ is greater than or equal to $t + 3$ and the bound is sharp for all $t$.
\end{corollary}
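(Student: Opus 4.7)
The plan is to get both the lower bound and its sharpness as essentially immediate consequences of Proposition~\ref{exact_rho}, so the work is mostly bookkeeping about what it means to be a basis element.

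First I would unpack the definition of $B_t$. A permutation $\sigma$ belongs to $B_t$ exactly when $t_{\rev}(\sigma) = t+1$ but every proper subpermutation of $\sigma$ has rev-tier at most $t$ (by Proposition~\ref{contain}, the rev-tier is monotone under containment, so minimality is equivalent to saying that deleting any one entry drops the rev-tier to $\le t$). This is the single observation that drives both parts.

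For the lower bound, let $\sigma \in B_t$ have length $n$. Then $t+1 = t_{\rev}(\sigma) \le \rho(n)$, and by Proposition~\ref{exact_rho} we have $\rho(n) = n-2$. Hence $t+1 \le n-2$, i.e.\ $n \ge t+3$.

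For sharpness, I would take the explicit construction from Proposition~\ref{exact_rho} with $n = t+3$, namely
\[
\pi =
\begin{cases}
(n-2)\cdots 6\,4\,2\,n\,1\,3\,5\cdots(n-1) & n \text{ even},\\
(n-1)\cdots 6\,4\,2\,n\,1\,3\,5\cdots(n-2) & n \text{ odd},
\end{cases}
\]
which has length $t+3$ and rev-tier $t+1$. Any proper subpermutation has length at most $t+2$, so by Proposition~\ref{exact_rho} its rev-tier is at most $\rho(t+2) = t$. Therefore $\pi$ is minimal with rev-tier $t+1$, which places $\pi$ in $B_t$ and exhibits a basis element of length exactly $t+3$.

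There is no real obstacle here; the only point to double-check is that the monotonicity needed for minimality is the containment version in Proposition~\ref{contain} rather than a mere single-deletion statement, but Proposition~\ref{contain} gives exactly that. The boundary cases $t = 0$ and $t = 1$ are consistent with the already-known basis elements $231$ and $\{2413, 2431, 23154\}$, which both realize length $t+3$.
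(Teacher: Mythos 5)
Your argument is correct and is exactly the route the paper intends: the paper states this corollary as an ``immediate consequence'' of Proposition~\ref{exact_rho}, and you have simply filled in the two pieces of bookkeeping (the bound $t+1 \le t_{\rev}(\sigma) \le \rho(n) = n-2$ for the lower bound, and minimality of the explicit length-$(t+3)$ construction via $\rho(t+2)=t$ for sharpness). The only nitpick is that you assert $t_{\rev}(\sigma) = t+1$ for a basis element where only $t_{\rev}(\sigma) \ge t+1$ is immediate from the definition, but your argument uses only the inequality, so nothing is affected.
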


\section{Maximal rev-tier permutations are counted by the Euler numbers}

\subsection{Computational data for permutations of exact rev-tier $t$}

A simple program was written in SAGE~\cite{sagemath} to compute the rev-tier of all permutations up to length $10$. The data for the number of permutations of a given length and exact rev-tier is given in Table~\ref{table_1}.  In particular, the numbers on the top non-zero diagonal beginning at $n=3,t=1$ appear in the OEIS~\cite{OEIS} and are known as the Euler or Down/Up Numbers (A000111, A163747, A163982).

\begin{table}
\[
\begin{tabular}{|r||c|c|c|c|c|c|c|c|c|}
\hline
       & t = 0 	& t = 1 	& t = 2		& t = 3 		& t = 4 		& t = 5		& t = 6 	& t = 7		& t = 8	 		\\ \hline
n = 1  &	1	  &			&			&				&				&			&			&			&	\\ \hline
n = 2  &	2 	  & 		&			&				&				&			&			&			&	\\ \hline
n = 3  &	5	  &  1		&			&				&				&			&			& 			&	 \\ \hline
n = 4  &	14	  & 8		&	2		&				&				&			&			&			&	  \\ \hline
n = 5  &	42	  & 47		&	26		&	5			&				&			&			&			&	  \\ \hline
n = 6  & 	132	  & 248		&	228		& 96			&	16			&			&			&			&	   \\ \hline
n = 7  &	429	  & 1249	& 1702		& 1178			& 421			&	61		&			&			&	   \\ \hline
n = 8  &	1430  &	6154	& 11704		& 11840			& 6816			&	2102	&	272		&			&	  \\ \hline
n = 9  &	4862  &	30013	& 76845		& 106567		& 88020			&	43347	&	11841	&	1385	&	   \\ \hline
n = 10 &	16796 &	145764	& 490866	& 896560		& 997056		&	697644	&	302002	&	74176	& 7936	\\ \hline
\end{tabular}
\]
\caption{Number of permutations of length $n$ and exact rev-tier $t$}
\label{table_1}
\end{table}

\subsection{A new Entringer family}

We begin with some historical information on the discovery and refinement of the Euler numbers including the Entringer numbers found in ``A Survey of Alternating Permutations" by Stanley~\cite{stanley:survey-alt-perm}.


Euler was known to have studied the odd indexed terms of this sequence which have exponential generating function $\displaystyle{\sum_{n \geq 0} E_{2n+1}\frac{x^{2n+1}}{(2n+1)!} = \tan{x}}$.
The original combinatorial interpretation of Euler numbers (with any integer index) was given by Andr\'e~\cite{andre:developpements-:}.  In particular, $E_n$ is shown to count  the \emph{down/up permutations} which are \emph{alternating permutations} of length $n$ (beginning with a descent).  Note that $\displaystyle{\sum_{n \geq 0} E_{n}\frac{x^{n}}{n!} = \tan{x}+\sec{x}}$.
In 1966, Entringer~\cite{entringer:euler} published a more refined enumeration of these alternating permutations of length $n$ based on the initial term.  However, the construction of the triangle corresponding to this partitioning of the Euler numbers was published by Seidel~\cite{seidel} in 1877.

\begin{definition}
The \emph{Entringer number $E_{n,k}$} is the number of alternating permutations of $[n]$ beginning with $k$.  Thus $\displaystyle{\sum_{k=1}^{n}{E_{n,k}} = E_{n}}$.    
\end{definition}

\begin{theorem}~\label{Euler_rec} (Entringer, Seidel)

The sequence $\{E_{n,k}\}$ is defined recursively where $E_{1,1} = 1$, $E_{n,1} = 0$ when $n >1$, and
$$E_{n,k} = E_{n,k-1} + E_{n-1,n+1-k} \hspace{.5 cm} \text{ for } 1 \leq k \leq n.$$
\end{theorem}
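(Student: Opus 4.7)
The plan is to prove the recurrence combinatorially by partitioning the set $A_{n,k}$ of alternating (down/up) permutations of $[n]$ beginning with $k$ into two pieces, one of size $E_{n,k-1}$ and one of size $E_{n-1,n+1-k}$. Throughout I assume $k \geq 2$; the boundary values $E_{1,1}=1$ and $E_{n,1}=0$ (for $n>1$) follow because $\pi_{1}=1$ leaves no room for a descent at position $1$. I would split $A_{n,k}$ according to the second entry, writing $A_{n,k}=A'_{n,k}\sqcup A''_{n,k}$, where $A'_{n,k}=\{\pi:\pi_{2}<k-1\}$ and $A''_{n,k}=\{\pi:\pi_{2}=k-1\}$ (note $\pi_{2}\le k-1$ is forced by $\pi_{1}>\pi_{2}$).

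For the first piece I would exhibit the swap bijection $A'_{n,k}\to A_{n,k-1}$ that interchanges the entries $k$ and $k-1$ in $\pi$. The image clearly begins with $k-1$, so the main thing to verify is that alternation is preserved. Since the values $k-1$ and $k$ are consecutive, for every position $j$ the comparisons $\pi_{j}<k-1$, $\pi_{j}<k$ (and similarly for $>$) coincide, so the local descent/ascent pattern at the former position of $k-1$ is unchanged; the condition $\pi_{1}=k>\pi_{2}$ transfers to $k-1>\pi_{2}$ precisely because we restricted to $\pi_{2}<k-1$. The inverse is the same swap, and the hypothesis $\pi_{2}<k-1$ corresponds to the fact that in any $\tau\in A_{n,k-1}$ the entry $k$ cannot occupy position $2$ (else $\tau_{2}=k>\tau_{1}=k-1$), which guarantees the swap lands back in $A'_{n,k}$. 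This gives $|A'_{n,k}|=E_{n,k-1}$.

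For the second piece I would delete the leading entry. If $\pi\in A''_{n,k}$ then $\pi=k,(k-1),\pi_{3},\pi_{4},\ldots,\pi_{n}$ with $\pi_{2}<\pi_{3}>\pi_{4}<\cdots$, so removing $\pi_{1}=k$ yields an up/down alternating sequence on $[n]\setminus\{k\}$ of length $n-1$ beginning with $k-1$. Standardizing (i.e.\ shifting values greater than $k$ down by one) turns this into an up/down alternating permutation $\sigma'$ of $[n-1]$ beginning with $k-1$, and finally the complement $x\mapsto n-x$ converts $\sigma'$ into a down/up alternating permutation of $[n-1]$ beginning with $n-(k-1)=n+1-k$. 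Each step is a bijection, so $|A''_{n,k}|=E_{n-1,n+1-k}$.

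Adding the two cardinalities yields the recurrence. I expect the delicate step to be the alternation check in the swap argument — in particular, the precise reason that allowing $\pi_{2}=k-1$ must be excluded from Case 1 (otherwise the swap would force $\pi'_{1}=k-1=\pi'_{2}$, which is illegal), and the verification that the inverse swap really does produce a permutation with $\pi_{2}<k-1$. Once those local comparisons are recorded carefully, combining the two bijections with the boundary conditions completes the proof of Theorem~\ref{Euler_rec}.
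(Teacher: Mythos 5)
The paper does not prove Theorem~\ref{Euler_rec} at all: it is quoted as a known result of Entringer and Seidel, with citations in place of a proof. So there is nothing to compare against; what you have written is a self-contained combinatorial proof of the cited recurrence, and it is correct. Your decomposition by the second entry is the classical one: the swap of the consecutive values $k$ and $k-1$ preserves all local comparisons except possibly at positions $1$ and at the (old) position of $k-1$, and you correctly isolate the only failure mode ($\pi_2=k-1$) into the second case; the check that $k$ cannot sit in position $2$ of a permutation in $A_{n,k-1}$ is exactly what makes the swap invertible onto $A'_{n,k}$. The deletion--standardization--complement chain in the second case likewise lands in $A_{n-1,n+1-k}$ with each step reversible, and the boundary values $E_{1,1}=1$, $E_{n,1}=0$ are handled correctly. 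A small check against the paper's own data ($E_{5,3}=4=E_{5,2}+E_{4,3}=2+2$, with $31425,31524$ going to $21435,21534$ under the swap and $32415,32514$ going to $3241,3142$ under delete-and-complement) confirms the bijections behave as you describe.
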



\begin{example}  Consider the alternating permutations of length five.  Note $E_{5,5} = 5$ as it counts $$51324,51423,52314,52413,53412.$$  By exchanging the $4$ and $5$ in the previous five permutations, we have $E_{5,4}=5$ since it counts $$41325,41523,42315,42513,43512.$$  Then $E_{5,3} =4$ as it counts $31425, 31524, 32415,32514$ and finally $E_{5,2} =2$ as it counts $21435,21534.$ \\  Note $E_{5,1}=0$.  Collecting all of these permutations, we see $E_5 = 16$.
\end{example}

Since Andr\'e's discovery and Entringer's refinement, other researchers have found various combinatorial objects with the same enumeration.   In one such paper, Gelineau, Shin, and Zeng~\cite{gelineau:bijections} studied known Entringer families and also identified new Entringer families.  They created bijective proofs linking all of the twelve of the families they considered.  Others who have studied Entringer families include Poupard~\cite{poupard:two-other} and Kuznetsov, Pak, and Postinikov~\cite{kuznetsov:increasing}.

\begin{definition} Let $R_n$ be number of permutations of length $n \geq 3$ with rev-tier $n-2$.  

Define $R_{n,k}$ to be the number of permutations of length $n$ with rev-tier $n-2$ where $1$ is in position $k+1$.  

Finally, let $\R_{n,k}$ be the set of permutations of length $n$ with rev-tier $n-2$ where $1$ is in position $k+1$.
\end{definition}

\begin{remark} We note that $R_{n,n}  =R_{n,1} = R_{n,0} =0$ for all $n \geq 3$.  Thus $\displaystyle{R_n = \sum_{k=2}^{n-1} R_{n,k}}$.
\end{remark}

\begin{example}  Consider the permutations of length six with rev-tier four.  Note $R_{6,5} = 5$ as it counts $246351,246531,426351,426531,462531.$ 

Move the $1$ one position to the left to obtain $R_{6,4}=5$ counting $246315,246513,426315,426513,462513.$  

Then $R_{6,3} =4$, counting $246135,246153,426135,426153,$ 

and finally $R_{6,2} =2$ as it counts $241635,241653.$  Thus $R_6 = 16$.
\end{example}

We introduce notation for counting the number of inversions an entry of a permutation is involved in specifically as the left (larger) entry or specifically as the right (smaller) entry.  

\begin{definition}  The pair $(\pi_i, \pi_j)$ is said to be an \emph{inversion} in $\pi$ if $i<j$ and $\pi_i > \pi_j$.  
Further, let

$\inv_{L}(\pi_i) = $ the number of inversions of $\pi$ where $\pi_i$ is the first/left entry of the inversion and 

$\inv_{R}(i) = $ the number of inversions of $\sigma$ where $i$ is the second/right entry of the inversion. 
\end{definition}

The sequence $\inv_{L}(\pi_1) \ldots \inv_{L}(\pi_n)$ is called the \emph{inversion sequence} of $\pi$.  For more information on these sequences, see work by Corteel, Martinez, Savage, and Weselcouch~\cite{corteel:patterns}.

\begin{example}
The permutation $\pi = 3142$, is such that 
$\inv_{L}(\pi_1) = 2$,
$\inv_{L}(\pi_2) = 0$,
$\inv_{L}(\pi_3) = 1$, and
$\inv_{L}(\pi_4) = 0$.
Further,
$\inv_{R}(1) = 1$,
$\inv_{R}(2) = 2$,
$\inv_{R}(3) = 0$, and
$\inv_{R}(4) = 0$.
\end{example}

\begin{theorem}~\label{Euler}  The permutations of length $n \geq 3$ with rev-tier $n-2$ form an Entringer family.  Specifically, $R_{n,k} = E_{n-1,k}$ for $1 \leq k \leq n-1$ and so $R_n = E_{n-1}$ when $n \geq 3$.
\end{theorem}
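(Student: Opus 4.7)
The plan is to show that $R_{n,k}$ satisfies the same recurrence as $E_{n-1,k}$, namely
\[R_{n,k}\;=\;R_{n,k-1}+R_{n-1,\,n-k},\]
which by Theorem~\ref{Euler_rec} (specialized to $E_{n-1,k}=E_{n-1,k-1}+E_{n-2,n-k}$) yields $R_{n,k}=E_{n-1,k}$ by induction on $n$. The base case $R_{3,2}=1=E_{2,2}$ is witnessed by $231$, and the boundary $R_{n,1}=0=E_{n-1,1}$ is immediate: if $p(1)=2$ then $p(2)=1$ and the pair $(1,2)$ admits no separator. The same observation shows that every $\pi\in\R_{n,k}$ has $p(2)\le k-1$, so that $\R_{n,k}$ splits as $A\sqcup B$ where $A=\{\pi:p(2)<k-1\}$ and $B=\{\pi:p(2)=k-1\}$.

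For the first piece, define $\phi\colon A\to\R_{n,k-1}$ by transposing the entries at positions $k$ and $k+1$, so that $1$ slides one step to the left. A case analysis shows that every pair $(i,i+1)$ with $i\ge 2$ keeps both its orientation and its separator: the only entries that move are $1$ and $m=\pi_k$, and since $1$ is never a valid separator (all separators have value $\ge 3$), exchanging the entries at positions $k$ and $k+1$ neither reorders such a pair nor invalidates its separator. The pair $(1,2)$ itself remains separated because $p(2)\le k-2$ leaves a non-empty interval of positions strictly between $2$ and the new site of $1$, and every entry in that interval is automatically $\ge 3$. The symmetric swap, applied to any element of $\R_{n,k-1}$, produces an element of $A$, so $\phi$ is a bijection and $|A|=R_{n,k-1}$.

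For the second piece, define $\psi\colon B\to\R_{n-1,n-k}$ by reversing $\pi$, deleting the entry $1$, and subtracting $1$ from every remaining entry. Reversal flips the orientation of each separated pair while keeping its separator in place; deleting $1$ is harmless since $1$ is never a separator; and the unit downshift identifies the pair $(i+1,i+2)$ of the reversed permutation with the pair $(i,i+1)$ of $\psi(\pi)$, preserving the new orientation. Hence the maximal alternating chain $(1,2),(2,3),\dots,(n-2,n-1)$ starting down in $\pi$ becomes the maximal alternating chain $(1,2),\dots,(n-3,n-2)$ starting down in $\psi(\pi)$, which therefore has rev-tier exactly $n-3$; tracking positions, the entry $2$ of $\pi$ (originally at position $k-1$) lands at position $n-k+1$ of $\psi(\pi)$ and is relabelled as $1$, so $\psi(\pi)\in\R_{n-1,n-k}$. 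The inverse takes $\pi'\in\R_{n-1,n-k}$, shifts every entry up by $1$, inserts a new entry $1$ at position $n-k$, and reverses. The main obstacle is the bookkeeping for this inverse: one must verify that the entry finally landing at position $k$ of the constructed $\pi$ has value $\ge 4$, so that $\pi_k$ separates $(1,2)$ and places $\pi$ in $B$. This reduces to showing $\pi'_{n-k}\ge 3$, which holds because $p'(1)=n-k+1$ and (by the analogous boundary condition for $\R_{n-1,n-k}$) $p'(2)\le n-k-1$. Combining the two bijections yields $|\R_{n,k}|=|A|+|B|=R_{n,k-1}+R_{n-1,n-k}$, and induction completes the proof.
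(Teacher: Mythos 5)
Your proof is correct, but it takes a genuinely different route from the paper's. The paper constructs an explicit bijection $f:\E_{n-1,k}\to\R_{n,k}$ directly from the inversion sequence of an alternating permutation (each odd value is inserted $\inv_{L}+2$ slots into the remaining open positions, each even value $\inv_{L}+1$ slots in, with $n$ filling the last gap), and then verifies that consecutive values land as separated pairs of alternating orientation. You instead establish the Entringer recurrence $R_{n,k}=R_{n,k-1}+R_{n-1,n-k}$ by splitting $\R_{n,k}$ according to whether $2$ sits immediately left of the separator of $(1,2)$, using a ``slide the $1$ one step left'' swap on one part and reverse--delete--relabel on the other, and then conclude by a double induction. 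I checked both bijections: the swap is safe because the only entries that move are $1$ and $\pi_k$, and for any pair $(i,i+1)$ with $i\ge 2$ the positional interval between its members either gains or loses only the position holding $1$ (never a separator) or is unchanged; the reverse--delete map carries the chain $(1,2),\dots,(n-2,n-1)$ with orientations down, up, down, \dots\ to the chain $(1,2),\dots,(n-3,n-2)$ with the same starting orientation, and sends the entry $2$ from position $k-1$ to the new $1$ at position $n-k+1$, as required. The recurrence also agrees with the paper's data (e.g.\ $R_{6,4}=R_{6,3}+R_{5,2}=4+1=5$). The trade-off is that the paper's argument produces an explicit, non-recursive bijection with alternating permutations, which is what one wants for connecting this family to the other known Entringer families, whereas yours is closer in spirit to Entringer's original proof, is more elementary, and exhibits two natural local moves on maximal rev-tier permutations. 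One small quibble: the separator at position $k$ of $\pi\in B$ only needs value at least $3$ for $(1,2)$ to be down separated; the bound of $4$ you verify is also true (value $3$ there would destroy the pair $(2,3)$) and does no harm.
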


\begin{proof}  Let $\E_{n,k}$ be the set of alternating permutations of length $n$ beginning with $k$.   

We construct a bijection $f: \E_{n-1,k} \rightarrow \R_{n,k}$ defining $f(\pi)$ by beginning at $\pi_1$ and proceeding left to right as follows:
\begin{enumerate}
\item Place each odd entry $2i+1$ in position $\inv_{L}(\pi_{2i+1})+2$ \emph{of the remaining positions}.
\item Place each even entry $2i$ in position $\inv_{L}(\pi_{2i})+1$  \emph{of the remaining positions}.
\item The entry $n$ is placed in the remaining spot.
\end{enumerate}

Consider the entries $j-1$ and $j$ in $f(\pi)$ where $j-1 \leq n-2$.  Specifically, consider their insertion in the permutation $f(\pi)$ by the above algorithm.

Suppose $j-1$ is even and placed in position $\inv_{L}(\pi_{j-1})+1$ of the open slots, then $j$ will be placed in position $\inv_{L}(\pi_j)+2$  of the positions that remain.  Certainly $\inv_{L}(\pi_j)+2 > \inv_{L}(\pi_{j-1})+1$ since $\pi_{j}>\pi_{j-1}$.  After $j-1$ is placed, the new position $\inv_{L}(\pi_{j-1})+1$ is the next open slot to the right of $j-1$.  Hence $j$ is placed to the right of $j-1$  in  $f(\pi)$ with at least one open position between them that must then be filled by a larger entry.  Therefore $(j-1,j)$ is an up separated pair in $f(\pi)$.

Similarly if $j-1$ is odd and placed in position $\inv_{L}(\pi_{j-1})+2$ of the remaining positions, then $j$ will be placed in position $\inv_{L}(\pi_j)+1$ of the positions that remain.  Notice $\inv_{L}(\pi_{j-1}) > \inv_{L}(\pi_j)$ since $\pi_{j-1}>\pi_{j}$ and $\pi_{j}$ is to the right of $\pi_{j-1}$.  Hence $j$ is placed to the left of $j-1$  in  $f(\pi)$ with at least one open position between them.  The only entries that remain to fill that open position are larger than $j$.  Therefore $(j-1,j)$ is a down separated pair in $f(\pi)$.

Thus $f$ takes alternating permutations of length $n-1$ to permutations of length $n$ with an alternating sequence of separated pairs $(1,2),(2,3),\ldots,(n-2,n)$ where $(1,2)$ is a down separated pair.  Note that in particular, $1$ is placed in position $\inv_{L}(\pi_1) +2 = (\pi_1-1)+2 =\pi_1+1 = k+1$.  In other words, $f$ does indeed map $\E_{n-1,k}$ into $\R_{n,k}$.

Further, this map is invertible.  
Starting with the entry $1$ in a permutation $\sigma \in \R_{n,k}$ and proceeding in order by the value of entries in $\sigma$ from $1$ to $n-1$, determine the entries of $\pi=f^{-1}(\sigma)$ as follows:
\begin{enumerate}
\item Define $\pi_{2i+1}$ to be the $\inv_{R}(2i+1)$st largest entry of those that have not already been selected.
\item Define $\pi_{2i}$ to be the $[\inv_{R}(2i) + 1]$st largest entry of those that have not already been selected.
\end{enumerate}

To see that $f^{-1}$ is indeed the inverse of $f$, consider a permutation $\sigma$ of length $n$ and rev-tier $n-2$.  Let $\pi = f^{-1}(\sigma)$.
By these definitions, $\pi_{2i+1}$ will have $\inv_{R}(2i+1) - 1$ smaller entries to its right.  That is, $\inv_{L}(\pi_{2i+1}) = \inv_{R}(2i+1) -1$.  And entry $2i+1$ of $\sigma$ is in position $\inv_{R}(2i+1)+1$ of the positions occupied by entries at least as large as $2i+1$.  Thus $\sigma$ has $2i+1$ in position $\inv_{L}(\pi_{2i+1}) +2$ among the entries at least as large as $2i+1$.
Similarly, $\pi_{2i}$ will have $[\inv_{R}(2i)+1] - 1=\inv_{R}$ smaller entries to its right.  That is, $\inv_{L}(\pi_{2i}) = \inv_{R}(2i)$.
The entry $2i$ of $\sigma$ is in position $\inv_{R}(2i)+1$ of the positions occupied entries at least as large as $2i$.   Thus $\sigma$ has $2i$ in position $\inv_{L}(\pi_{2i}) +1$ among the entries at least as large as $2i$.

 
\end{proof}

\begin{example}
Consider an alternating permutation $\pi = 21534$ which is one of the permutations counted by $\E_{5,2}$.  Then $\sigma =f(\pi)$ is such that:
\begin{flalign*}
&1  \text{ is in the }   \inv_{L}(\pi_1) + 2  =3\text{rd position of the open positions in  } \sigma. & &&
\sigma &= \underline{ \quad}\:\underline{ \quad}\:1\: \underline{ \quad}\:\underline{ \quad}\:\underline{ \quad}\\
& 2  \text{ is in the }  \inv_{L}(\pi_2) + 1  =1\text{st position of the open positions in  } \sigma. & &&
 \sigma &= 2\: \underline{ \quad}\:1\: \underline{ \quad}\:\underline{ \quad}\:\underline{ \quad}\\
& 3  \text{ is in the }   \inv_{L}(\pi_3) + 2  =4\text{th position of the open positions in  } \sigma. & &&
 \sigma &= 2\: \underline{ \quad}\:1\: \underline{ \quad}\:\underline{ \quad}\:3\\
& 4  \text{ is in the }  \inv_{L}(\pi_4) + 1  =1\text{st position of the open positions in  } \sigma. & &&
 \sigma &= 2\: 4\:1\: \underline{ \quad}\:\underline{ \quad}\:3\\
& 5  \text{ is in the }   \inv_{L}(\pi_5) + 2  =2\text{nd position of the open positions in  } \sigma. & &&
 \sigma &= 2\: 4\:1\: \underline{ \quad}\:5\:3\\
&\text{The position of } 6 \text{ in } \sigma \text{ is in the only open position in } \sigma. & &&
\sigma &= 241653 \in \R_{6,2}.
\end{flalign*}
%
\end{example}

\begin{example}
Consider a permutation $\sigma = 6247153$ with rev-tier $5$ which is one of the permutations counted by $\R_{7,4}$.  Then $\pi =f^{-1}(\sigma)$ is such that:
\begin{align*}
&\pi_1  \text{ is the }   \inv_{R}(1) =4\text{th largest value of the remaining values of } \pi, \text{ that is } \pi_1 = 4 \\
&\pi_2  \text{ is the }   \inv_{R}(2) +1 = 2\text{nd largest value of the remaining values of } \pi , \text{ that is } \pi_2 = 2 \\
&\pi_3  \text{ is the }   \inv_{R}(3) =4\text{th largest value of the remaining values of } \pi, \text{ that is } \pi_3 = 6 \\
&\pi_4  \text{ is the }   \inv_{R}(4) +1 = 2\text{nd largest value of the remaining values of } \pi , \text{ that is } \pi_4 = 3 \\
&\pi_5  \text{ is the }   \inv_{R}(5) =2\text{nd largest value of the remaining values of } \pi, \text{ that is } \pi_5 = 5 \\
&\pi_6  \text{ is the }   \inv_{R}(6) +1 = 1\text{st largest value of the remaining values of } \pi , \text{ that is } \pi_6 = 1 
\end{align*}
That is, the steps described give us $f^{-1}(6247153) = 426351 \in \E_{6,4}$.
\end{example}


\section{Generating Functions}~\label{gen-fun}

In order to determine the generating function for the number of permutations of length $n$ and exact rev-tier $t$, we refine the sets of permutations under consideration. 

\begin{definition}
A permutation $\alpha$ is \emph{up-oriented} if any maximum length increasing sequence of alternating separated pairs in $\alpha$ begins with an up separated pair. That is, suppose $ (i_1, i_1 + 1), (i_2, i_2 + 1), \ldots ,(i_p, i_p + 1)$ is a maximum length increasing sequence of separated pairs of alternating orientations in $\alpha$.  Then $\alpha$ is up-oriented precisely if $(i_1,i_1 +1)$ is an up separated pair. 
Let $M_U$ be the set of all up-oriented permutations. 

Define the corresponding notion for \emph{down-oriented} permutations and let $M_D$ denote this set. 

Finally, let $N$ denote permutations with no separated pairs. 
\end{definition}

The set of all permutations is the disjoint union of  $N, M_U,$ and  $M_D$.
Further, $N$ is a permutation class, whereas $M_U$ and $M_D$ are not permutation classes. In fact, $N = Av(132, 231)$. Let $\eta_n$ be the number of elements in $N$ 
 of length $n$, and let
\[
F^N(x) = \sum_{n\ge 1} \eta(n) x^n.
\]


The statements of Theorem~\ref{rotem_1} and Theorem~\ref{rotem_2} are equivalent to the statement and proof respectively of the last corollary of a paper by Rotem~\cite{rotem:stack}.

\begin{theorem}~\label{rotem_1} (Rotem)
With $\eta(n)$ as above, $\eta(1) = 1, \eta(2) = 2,$ and $\forall n\ge 3, \eta(n) = 2^{n - 1}$.
\end{theorem}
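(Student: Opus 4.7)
The plan is to establish the two-term recurrence $\eta(n) = 2\eta(n-1)$ for every $n \geq 2$ by analyzing where the maximum entry $n$ can sit in a permutation $\pi \in N = \Av(132,231)$, and then iterate from the base $\eta(1) = 1$.

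First, I would prove the structural claim that $n$ must occupy either the first or the last position of $\pi$. Suppose instead that $n$ sits in some interior position, with at least one entry $a$ to its left and at least one entry $b$ to its right. Since $a \neq b$, either $a < b$, in which case the triple $(a, n, b)$ (read in left-to-right position order) realizes a $132$ pattern, or $a > b$, in which case it realizes a $231$ pattern; either alternative contradicts $\pi \in N$.

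Next, I would observe that when $n$ sits at a boundary position of $\pi$, it cannot participate in any $132$- or $231$-pattern. Being the maximum, $n$ can only play the role of the largest entry in such a pattern, but in both $132$ and $231$ the largest entry occupies the middle slot of the pattern occurrence, which requires entries on both sides of $n$ in $\pi$. Consequently, deleting $n$ from its boundary slot yields a permutation of $\{1, \ldots, n-1\}$ that lies in $N$ if and only if $\pi$ does, and the two boundary choices for $n$ produce distinct permutations whenever $n \geq 2$. This gives $\eta(n) = 2\eta(n-1)$, and iterating from $\eta(1) = 1$ delivers $\eta(n) = 2^{n-1}$ for all $n \geq 1$; in particular $\eta(2) = 2$ and $\eta(n) = 2^{n-1}$ for $n \geq 3$, as stated.

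I do not anticipate any real obstacle. The only substantive step is the interior-position case analysis, which reduces to the one-line observation that the two possible orderings $a < b$ and $a > b$ of the entries flanking an interior $n$ correspond exactly to the two forbidden patterns $132$ and $231$; everything else is straightforward bookkeeping.
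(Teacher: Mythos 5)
Your proof is correct. The observation that in both forbidden patterns $132$ and $231$ the largest entry sits in the middle, so that a permutation of $\Av(132,231)$ must have its maximum at one end (and, recursively, the same for each successive maximum), gives exactly the doubling recurrence $\eta(n)=2\eta(n-1)$ with $\eta(1)=1$, and every step you state (interior maximum forces a forbidden pattern; a boundary maximum can never participate in one; deletion/insertion at the boundary preserves membership in $N$ both ways) checks out.

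The route is different from the paper's, which offers no self-contained argument at all: it attributes the result to Rotem and, in effect, derives $\eta(n)=2^{n-1}$ from the refined statement of Theorem~\ref{rotem_2}, namely $\eta(n,k)=\binom{n-1}{k-1}$ where $k$ tracks the position of the entry $1$. That refinement comes out of the paper's general machinery of inserting a new minimum via the operators $\psi_i$ (a permutation of $N$ stays in $N$ precisely when the new $1$ lands adjacent to the old one, which is Pascal's recurrence), and it is the version the paper actually needs later, since the generating-function computations in Section~\ref{gen-fun} are all refined by the position of $1$. Your argument, pivoting on the position of the maximum rather than the minimum, is shorter and self-contained and is the cleanest way to get the unrefined count $2^{n-1}$, but it does not by itself produce the positional refinement $\binom{n-1}{k-1}$; if you wanted to recover Theorem~\ref{rotem_2} as well, you would need to track where the entry $1$ ends up under your recursive construction.
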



Next consider the refinement of the sets under consideration by the location of the smallest element of the permutation. Beginning with $N$, set 
\[
F^N(x, w) = \sum_{n\ge 1} \eta(n, k) x^n w^{k - 1}
\]
where $\eta(n, k)$ is the number of permutations in $N$ 
of length $n$ where the $1$ occurs in the $k$th position.

\begin{theorem}~\label{rotem_2} (Rotem)
With the notation as above $\eta(n, k) = \binom{n - 1}{k - 1}$.
\end{theorem}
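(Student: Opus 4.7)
The plan is to exhibit a bijection between $N_n$, the set of permutations in $N$ of length $n$, and the set $\{L,R\}^{n-1}$ of binary words of length $n-1$, and then to track how the position of $1$ in the resulting permutation depends on the word.

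The first step is to establish the structural lemma implicit in Theorem~\ref{rotem_1}: if $\pi \in N$ has length $n \geq 2$, then the largest entry $n$ sits either in position $1$ or in position $n$ of $\pi$. Indeed, if $n$ were in some interior position, then any entry $a$ to its left and any entry $c$ to its right (both necessarily smaller than $n$) would form a $132$ pattern $a\,n\,c$ when $a < c$, or a $231$ pattern $a\,n\,c$ when $a > c$, contradicting $\pi \in N = \Av(132,231)$. Deleting $n$ leaves a permutation of $[n-1]$ that still avoids both patterns, so the argument iterates down through $n-1, n-2, \ldots, 2$.

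The second step is to read this decomposition in reverse as a construction: every $\pi \in N_n$ arises from the singleton permutation $1$ by inserting $2$, then $3$, and so on up to $n$, where at each stage the new entry is placed either at the left end or at the right end of the current partial permutation. Let $\Phi(\pi) = (w_2, w_3, \ldots, w_n) \in \{L,R\}^{n-1}$ record these endpoint choices. The structural lemma shows that $\Phi$ is a well-defined bijection, which in particular re-derives $|N_n| = 2^{n-1}$.

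The third step is the key observation that links $\Phi$ to the position of $1$: at every stage of the construction, the entry $1$ already lies within the current partial permutation, so each $L$-insertion shifts $1$ exactly one position to the right while each $R$-insertion leaves its position unchanged. Hence in the final permutation $\pi$, the position of $1$ equals $1 + \#\{j : w_j = L\}$, so $1$ sits in position $k$ if and only if $\Phi(\pi)$ has exactly $k-1$ letters equal to $L$. The number of such binary words is $\binom{n-1}{k-1}$, giving $\eta(n,k) = \binom{n-1}{k-1}$.

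I do not foresee a serious obstacle. All the substance is concentrated in the structural lemma, where $132$- and $231$-avoidance combine to pin the maximum to an endpoint; once that is in place, the rest is just counting binary words by their number of $L$'s. The only minor care points are the base case $n=1$ (where the empty word corresponds to the unique $\pi = 1$ and the formula reduces to $\binom{0}{0}=1$) and the observation that distinct words yield distinct permutations, which is immediate since $\Phi$ is the inverse of the peel-off-the-maximum map.
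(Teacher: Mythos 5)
Your proof is correct, but note that the paper does not actually prove this statement at all: it is attributed to Rotem, with the remark that Theorems~\ref{rotem_1} and~\ref{rotem_2} correspond to the statement and proof of the last corollary of Rotem's paper. So there is no in-paper argument to match; the closest thing is the later Theorem~\ref{recur_mu}, where the insertion operators $\psi_i$ (which insert a new \emph{minimum}) yield the Pascal recurrence $\eta(n+1,k)=\eta(n,k)+\eta(n,k-1)$, from which $\binom{n-1}{k-1}$ follows by induction. Your route is different and more direct: you insert successive \emph{maxima} at the two ends, biject $N_n$ with $\{L,R\}^{n-1}$, and track the position of $1$ as $1+\#L$; this simultaneously re-derives $\eta(n)=2^{n-1}$ and gives the refined count without any recurrence. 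One small point to tighten: the structural lemma gives that $\Phi$ is well defined and injective on $N_n$, but for the count you also need surjectivity, i.e.\ that every word in $\{L,R\}^{n-1}$ produces a permutation avoiding $132$ and $231$. This is easy --- in any triple of entries of the constructed permutation, the largest was inserted last and at an end, so it cannot occupy the middle position, ruling out both forbidden patterns --- but it should be stated, since without it you only get the inequality $\eta(n,k)\le\binom{n-1}{k-1}$.
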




To consider the elements in $M_U$ and $M_D$, define the operator
\[
\Theta: S = \cup_n S_n \rightarrow \Z[x,y,w], {\rm ~where~} \Theta(\alpha) = x^{n(\alpha)}y^{t_{rev}(\alpha)}w^{k(\alpha) - 1},
\]
 $n(\alpha)$ is the length of $\alpha$, $t_{rev}(\alpha)$ is the rev-tier of $\alpha$, and $k(\alpha)$ is the position of the $1$ in $\alpha$. Note that
\[
F^N(x, w) = \sum_{\alpha \in N} \Theta(\alpha).
\]
We similarly construct generating functions for all remaining permutations, namely those in $M_U$ and those in $M_D$.  Abusing notation slightly let
\[
\begin{array}{c}
\displaystyle{F(x, y, w) = \sum_{\alpha \in S} \Theta(\alpha) = \sum_{n, t, k} f(n, t, k) x^ny^tw^{k - 1},}\\ \displaystyle{M^U(x, y, w) =  \sum_{\alpha \in M_U} \Theta(\alpha) = \sum_{n, t, k} \mu_U(n, t, k) x^ny^tw^{k - 1}},\\ \displaystyle{M^D(x, y, w) =  \sum_{\alpha \in M_D} \Theta(\alpha) = \sum_{n, t, k} \mu_D(n, t, k) x^ny^tw^{k - 1}}.
\end{array}
\]

Since the set of all permutations is a disjoint union of $N, M_U$, and $M_D$, we have
\[
F(x, y, w) = F^N(x, w) + M^U(x, y, w) + M^D( x, y, w).
\]
Permutations have a down separated pair exactly if they have rev-tier at least $1$, hence
\begin{equation*}
f(n, 0, k) = \eta(n, k) + \mu_U(n, 0, k)
\end{equation*}
and 
\begin{equation*}
f(n, t, k) =  \mu_U(n, t, k) + \mu_D(n, t, k) {\rm~for~} t\ge 1.
\end{equation*}

For $i = 1, 2, \ldots, n + 1$, let $\displaystyle{\psi_i : S_{n - 1} \rightarrow S_n}$ be defined such that $\psi_i(\alpha)$ is the permutation obtained by increasing all of the values in $\alpha$ by $1$, and then inserting a $1$ in the $i$th position. Each permutation in $S_n$ is obtained by a unique $\psi_i$ applied to a unique permutation.  Now consider the action of these operators on our subsets $N, M_U$, and $M_D$. The rev-tier of the resulting permutation is recorded in the following table.
\[
\begin{array}{|c|c|c|c|}
\hline
\alpha\in {\rm Set}  & {\rm Condition} & \psi_i(\alpha)\in {\rm ~Set~} & t(\psi_i(\alpha)) \\
\hline
N & i = k, {\rm~or~} k + 1 & N & 0\\
 & i \leq k -1 & M_U & 0\\
 & i \geq k + 2 & M_D & 1\\
 \hline
M_U & i \le k + 1 & M_U & t(\alpha) \\
    & i \geq k + 2 & M_D & t(\alpha) + 2 \\
    \hline
M_D & i \leq k -1 & M_U & t(\alpha)\\
    & i \ge k & M_D & t(\alpha)\\
    \hline
\end{array}
 \]
To explain the entries in the table above, we work through each case.

Consider $\alpha \in N$. 
Then $\psi_i(\alpha)$ also has no separated pairs if and only if the resulting $1$ and $2$ are adjacent in $\psi_i(\alpha)$, that is $k(\psi(\alpha)) = k(\alpha)$ or $k(\alpha) + 1$ (where $k$ is the location of the $1$). Further $\psi_i(\alpha) \in M_U$ if and only if $(1,2)$ is an up separated pair in $\psi_i(\alpha)$, i.e. $i \leq k-1$.  Finally, $\psi_i(\alpha) \in M_D$ if and only if $(1,2)$ is a down separated pair (i.e. $i \geq k + 2$) which also  increases the rev-tier of $\psi_i(\alpha)$ to $1$.

If $\alpha$ is in $M_U$, we can create a longer increasing sequence of alternating separated pairs (ISASP) if and only if the $(1,2)$ becomes a down separated pair, i.e. $i \geq k+2$.  In this case, the longest such sequence beginning with a down separated pair increases by two, and thus the rev-tier of $\psi_i(\alpha)$ increases only in this case. Otherwise, $\psi_i(\alpha) \in M_U$ (and no change in rev-tier from $\alpha$ to $\psi_i(\alpha)$) if and only if $i \le k + 1$.

Finally if $\alpha$ is in $M_D$, then we cannot create a longer ISASP beginning with a down separated pair.  Hence the rev-tier of the image $\psi_i(\alpha)$ is the same as the rev-tier of $\alpha$. However, $\psi_i(\alpha)$ will be in $M_U$ if and only in $(1,2)$ becomes an up separated pair, i.e. if $i \leq k-1$.

Inverting the table above gives the following recurrences. 

\begin{theorem}~\label{recur_mu}
With all of the notation as above:
\begin{equation*}
\eta (n + 1, k)= \eta (n, k) + \eta (n, k - 1),
\end{equation*}
\begin{equation*}
\mu_U(n + 1, 0, k) = \sum_{i \ge k + 1} \eta(n, i) +\sum_{i \ge k - 1} \mu_U(n, 0 ,i),
\end{equation*}
and for $t \ge 1$
\begin{equation*}
 \mu_U(n + 1, t, k) = \sum_{i \ge k + 1} \mu_D(n, t, i) + \sum_{i \ge k - 1} \mu_U(n, t ,i),
\end{equation*}
\begin{equation*}
\mu_D(n + 1, 1, k) = \sum_{i \le k - 2} \eta(n, i) + \sum_{i \le k } \mu_D(n, 1 ,i),
\end{equation*}
and for $t \ge 2$
\begin{equation*}
 \mu_D(n + 1, t, k) = \sum_{i \le k - 2} \mu_U(n, t - 2, i) + \sum_{i \le k } \mu_D(n, t ,i).
\end{equation*}
\end{theorem}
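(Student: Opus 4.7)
The plan is to derive each recurrence by inverting the table immediately preceding the statement. The key observation is that every $\beta \in S_{n+1}$ arises as $\psi_j(\alpha)$ for a unique pair $(\alpha, j)$ with $\alpha \in S_n$ and $j \in \{1, \ldots, n+1\}$: delete the $1$ from $\beta$ and subtract $1$ from every remaining entry to recover $\alpha$, and set $j = k(\beta)$. So to count the permutations $\beta$ in a prescribed set with prescribed rev-tier $t$ and prescribed position $k(\beta) = k$, I would sum over all admissible predecessors $\alpha$, grouped by $\ell := k(\alpha)$, subject to $j = k$ together with the relevant row-condition of the table.

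First I would handle the $\eta$ recurrence. A permutation $\beta$ remains in $N$ only if no new separated pair is created, which by the table forces $\alpha \in N$ with $j \in \{\ell, \ell + 1\}$. Setting $j = k$ yields $\ell \in \{k, k - 1\}$, giving $\eta(n + 1, k) = \eta(n, k) + \eta(n, k - 1)$. The four $\mu$ recurrences are read off in the same way: for each one I would scan the rows of the table whose image-set is $M_U$ or $M_D$ and whose image-rev-tier matches the prescribed $t$. For instance, for $\mu_U(n + 1, 0, k)$ the contributing rows are $\alpha \in N$ with $j \leq \ell - 1$, together with $\alpha \in M_U$ satisfying $t(\alpha) = 0$ and $j \leq \ell + 1$; after substituting $j = k$ these become the conditions $\ell \geq k + 1$ and $\ell \geq k - 1$ respectively, which rearrange into the stated summation.

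Two subtleties deserve attention, and I expect them to be the only nontrivial parts of the argument. First, every permutation in $M_D$ has rev-tier at least $1$, since the down-started alternating chain certified by membership in $M_D$ is already a witness; hence $\mu_D(n, 0, \cdot) \equiv 0$, which is why no $M_D$-row appears in the $\mu_U(n + 1, 0, k)$ expansion even though the table seemingly offers one. Second, I would verify that the $M_U \to M_D$ transition truly raises the rev-tier by exactly $2$: prepending the new down pair $(1, 2)$ to the left of every existing separated pair extends a maximum increasing alternating chain (which starts with up for $\alpha \in M_U$) by one on its low end and flips its leading orientation to down, so if the old maximum chain has length $p$ then $t(\alpha) = p - 1$ while $t(\psi_j(\alpha)) = p + 1$. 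This $+2$ jump is what forces the source rev-tier $t - 2$ in the $\mu_D(n + 1, t, k)$ recurrence for $t \geq 2$, and explains why the $\mu_D(n + 1, 1, k)$ case instead draws on $\alpha \in N$ (rev-tier jumping from $0$ to $1$) and $\alpha \in M_D$ (rev-tier preserved).
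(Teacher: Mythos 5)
Your proposal is correct and follows essentially the same route as the paper: the paper's proof of Theorem~\ref{recur_mu} consists precisely of the table describing where $\psi_i(\alpha)$ lands for $\alpha\in N, M_U, M_D$ (together with the case-by-case justification of its entries) followed by the remark that inverting the table yields the recurrences, which is exactly the inversion you carry out. Your two flagged subtleties — that $\mu_D(n,0,\cdot)\equiv 0$ and that the $M_U\to M_D$ transition raises the rev-tier by exactly $2$ — are likewise both addressed in the paper (the former in the remark immediately after the theorem, the latter in the paragraph explaining the $M_U$ row of the table).
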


Note $\mu_U(1, 0, k) = \mu_D(1, 0, k) = 0$ for all $k$, and also $\mu_D(n, 0 ,k) = 0 $ for all $n$ and $k$.
We compute a few more of these coefficients directly.
\begin{lemma}~\label{bad_1_position}
For all $n \ge 1, t \ge 0$ we have:
\begin{center}
$\mu_U(n, t, n) = \mu_U (n, t, n - 1) = 0,$\\
$\mu_D(n, t, 1) = \mu_D(n, t, 2) = 0.$\\
\end{center}
\end{lemma}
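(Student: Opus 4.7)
The plan is to prove the two $\mu_U$ identities simultaneously by induction on $n$, and then deduce the $\mu_D$ identities via the reversal bijection that sends $\alpha = \alpha_1 \cdots \alpha_n$ to $\alpha_n \cdots \alpha_1$. Reversal swaps the orientation of every separated pair (since it swaps ``precedes'' with ``follows'') and sends the position of $1$ from $k$ to $n - k + 1$. Hence it gives a bijection from the up-oriented permutations of length $n$ with $1$ at position $k$ onto the down-oriented permutations of length $n$ with $1$ at position $n - k + 1$; emptiness of the former (for $k = n$ and $k = n - 1$) will force $\mu_D(n, t, 1) = \mu_D(n, t, 2) = 0$ for every $t$.

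The base cases $n \leq 2$ hold trivially, since any separated pair $(i, i + 1)$ demands a separator exceeding $i + 1$, so $M_U$ is empty. For the inductive step, I would suppose for contradiction that an up-oriented $\alpha$ of length $n$ has $1$ at position $n$ or $n - 1$, and first establish that $(1, 2)$ cannot be a separated pair in $\alpha$. It cannot be up separated: when $1$ is at position $n$, the value $1$ precedes nothing; and when $1$ is at position $n - 1$ the only arrangement with $1$ preceding $2$ places $2$ at position $n$, leaving no room for a separator. Nor can $(1, 2)$ be down separated: the first pair $(i_1, i_1 + 1)$ of any maximum-length alternating sequence in $\alpha$ is up with $i_1 \geq 2$, and prepending a down $(1, 2)$ would produce a strictly longer alternating sequence beginning with down, contradicting maximality.

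Since $(1, 2)$ is not separated, no value exceeding $2$ sits strictly between $2$ and $1$, which forces $2$ to position $n - 1$ when $1$ is at position $n$, and to position $n$ or position $n - 2$ when $1$ is at position $n - 1$. I then delete the entry $1$ from $\alpha$ and standardize to obtain $\alpha' \in S_{n - 1}$. Because every separated pair of $\alpha$ has $i \geq 2$, these correspond bijectively to the separated pairs of $\alpha'$ with indices shifted down by one and orientations preserved, so $\alpha'$ remains up-oriented. Tracking the new position of $1$ in $\alpha'$ (formerly the entry $2$ of $\alpha$), one finds it to be the last position $n - 1$ of $\alpha'$ in the two sub-cases where $2$ was at position $n - 1$ or $n$ in $\alpha$ (using that removing $1$ from position $n - 1$ shifts the entry at position $n$ down to position $n - 1$ of $\alpha'$), and the second-to-last position $n - 2$ in the remaining sub-case. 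In each case the inductive hypothesis asserts $\alpha' \notin M_U$, a contradiction.

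The main obstacle will be the careful bookkeeping in the deletion step, so that both halves of the inductive hypothesis (for $1$ at the last and at the second-to-last position of the smaller permutation) are correctly invoked; setting up the induction on both statements simultaneously is essential to close the argument.
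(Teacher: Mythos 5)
Your proof is correct, but it takes a genuinely different route from the paper's. The paper argues directly: if $(i,i+1)$ is unseparated then only entries smaller than $i$ can lie between $i$ and $i+1$, so the entries $1,\dots,k-1$ below the smallest separated value $k$ occupy consecutive positions; with $1$ in the final or penultimate position this block sits at the right end of the permutation, leaving no room to its right for both $k$ and a separator exceeding $k$, so the smallest separated pair is forced to be down-oriented and the permutation cannot lie in $M_U$. The $\mu_D$ half is then handled by the mirror-image direct argument rather than by reversal (the reversal lemma appears only later in the paper). Your induction-plus-deletion argument replaces this single structural observation with a recursive peeling of the entry $1$: the key steps --- that $(1,2)$ cannot be down-separated in an up-oriented permutation (by prepending it to a maximum-length alternating sequence), that $1$ and $2$ are therefore adjacent, and that deleting $1$ preserves separated pairs and their orientations while keeping the new minimum in one of the last two positions --- are all sound, and running the induction on both positions simultaneously does close the argument. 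Your use of reversal for the $\mu_D$ statements is also fine; note only that reversal shifts the rev-tier parameter by one (it carries $M_U(n,t,k)$ to $M_D(n,t+1,n-k+1)$), which is harmless here because you establish emptiness for every $t$ and $\mu_D(n,0,k)=0$ in any case. The paper's proof is shorter; yours trades that brevity for a more mechanical verification whose bookkeeping you have carried out correctly.
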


\begin{proof}
To see the conditions on $\mu_U$, suppose the $1$ appears in the penultimate or final position of $\pi$.  In these cases, consider filling in the rest of the entries of $\pi$ from smallest entry to largest.  Consider the first (if any) gap created in this process, say between an entry $k$  and the interval of entries $1,2,\ldots, k-1$.  This is the first time two consecutive entries $k-1$ and $k$ are separated by a larger entry. However, $k$ must appear before the gap that in turn precedes the smaller entries.  That is, the smallest separated pair of $\pi$, must be a down separated pair.  
Hence it is impossible for such a permutation to be in $M_U$. 

A similar argument shows that if the $1$ is in the first or second position of a permutation $\sigma$, the smallest separated pair must be up separated.  Thus such a permutation cannot be in $M_D$.
\end{proof}

A bit more can be done using elementary methods, as shown in the following lemma. 


\begin{lemma} For $t\ge 1$, 
$\mu_U(n, t, n - 2)$ is equal to the number of permutations of length less than or equal to $n - 1$, rev-tier $t$, and with $1$ in the final position. Also $\mu_D(n, t + 1, 3)$ is equal to the number of permutations of length less than or equal to $n - 1$, rev-tier $t - 1$, and a $1$ in the first position. 
That is 
\[
\mu_U(n, t, n - 2) = \sum_{m = 1}^{n - 1}f(m, t, m) = \sum_{m = 1}^{n - 1} \mu_D(m, t, m) \hspace{.5 cm} {\rm~and~} 
\]
\[\mu_D(n, t + 1, 3) = \sum_{m = 1}^{n - 1}f(m, t - 1, 1) = \sum_{m = 1}^{n - 1} \mu_U(n, t - 1, 1).
\]
\end{lemma}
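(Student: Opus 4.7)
The plan is to establish each identity by specializing the recurrences of Theorem~\ref{recur_mu} at the extremal positions $k = n-2$ and $k = 3$, and then using Lemma~\ref{bad_1_position} to kill all but one term in each of the two sums. In each case this collapses the full recurrence to a two-term recurrence that telescopes to the stated closed form. The equality between the $f$-sum and the $\mu_D$- or $\mu_U$-sum will then follow from Lemma~\ref{bad_1_position} together with the fact that $N$-permutations contribute only at rev-tier $0$.

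For the first identity, I would apply the $\mu_U$ recurrence with $n$ replaced by $n-1$ and $k=n-2$ to obtain
\[
\mu_U(n,t,n-2) \;=\; \sum_{i \ge n-3}\mu_U(n-1,t,i) \;+\; \sum_{i \ge n-1}\mu_D(n-1,t,i).
\]
Since positions in a length $n-1$ permutation run only up to $n-1$, and since $\mu_U(n-1,t,n-1)=\mu_U(n-1,t,n-2)=0$ by Lemma~\ref{bad_1_position}, the right-hand side collapses to $\mu_U(n-1,t,n-3) + \mu_D(n-1,t,n-1)$. Because $\mu_U(n-1,t,n-1)=0$ for $t\ge 1$ and no $N$-permutation has positive rev-tier, one has $\mu_D(n-1,t,n-1) = f(n-1,t,n-1)$. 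Iterating the resulting recurrence $\mu_U(n,t,n-2) = \mu_U(n-1,t,n-3) + f(n-1,t,n-1)$ down to the base case (where $\mu_U$ vanishes because the position $k = n-2$ falls out of the admissible range) telescopes to $\sum_{m=1}^{n-1} f(m,t,m)$. The second equality follows from Lemma~\ref{bad_1_position} applied termwise.

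For the second identity I would follow the symmetric argument. Since $t\ge 1$ gives $t+1\ge 2$, I apply the $t\ge 2$ case of the $\mu_D$ recurrence with $k=3$:
\[
\mu_D(n,t+1,3) \;=\; \sum_{i \le 1}\mu_U(n-1,t-1,i) \;+\; \sum_{i \le 3}\mu_D(n-1,t+1,i).
\]
Lemma~\ref{bad_1_position} gives $\mu_D(n-1,t+1,1)=\mu_D(n-1,t+1,2)=0$, and the first sum collapses to $\mu_U(n-1,t-1,1)$, producing the two-term recurrence
\[
\mu_D(n,t+1,3) \;=\; \mu_U(n-1,t-1,1) + \mu_D(n-1,t+1,3),
\]
which iterates to $\sum_{m=1}^{n-1}\mu_U(m,t-1,1)$.

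I expect the main obstacle to be bookkeeping around the base cases and the small-$t$ regime. In particular, one must check that iterating terminates with a vanishing $\mu_U$ or $\mu_D$ term at the correct small $n$, and that the passage from $\mu_D(n-1,t,n-1)$ to $f(n-1,t,n-1)$ (respectively from $\mu_U(m,t-1,1)$ to $f(m,t-1,1)$) uses both parts of Lemma~\ref{bad_1_position} and the observation that no $N$-permutation has positive rev-tier. A minor subtlety for the second identity is that equating the $\mu_U$-sum with the $f$-sum genuinely needs $t-1\ge 1$; for $t=1$ the $\eta(m,1)$ contribution to $f(m,0,1)$ must be separated out, or equivalently one must restrict attention to permutations that actually lie in $M_U$ when rev-tier equals zero.
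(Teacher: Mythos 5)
Your proof is correct, but it takes a genuinely different route from the paper's. You derive both identities formally from the recurrences of Theorem~\ref{recur_mu}: specializing at $k=n-2$ (resp.\ $k=3$), using Lemma~\ref{bad_1_position} to annihilate all but one term of each sum, and telescoping the resulting two-term recurrence. The paper instead argues structurally: it shows that any permutation counted by $\mu_U(n,t,n-2)$ must have the form $\ast\cdots\ast(k-1)\cdots 21\ast k$, so that the entries at least $k$ form (after rescaling) a permutation counted by $\mu_D(n-k+1,t,n-k+1)$, and then sums over $k$; the $\mu_D$ half is obtained from the reversal symmetry $\mu_U(n,t,n-2)=\mu_D(n,t+1,3)$ of Lemma~\ref{rev_lemma} rather than by a second telescoping. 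Your argument is shorter and purely mechanical once Theorem~\ref{recur_mu} is granted; the paper's buys an explicit description of the permutations being counted. One further point in your favor: your caveat about $t=1$ is not mere bookkeeping but flags a genuine defect in the statement as written. For $t=1$ one has $f(m,0,1)=\eta(m,1)+\mu_U(m,0,1)$ with $\eta(m,1)=1$, so $\sum_{m=1}^{n-1}f(m,0,1)$ overcounts $\mu_D(n,2,3)$ by $n-1$ (for instance $\mu_D(4,2,3)=1$, counting only $2413$, while $\sum_{m=1}^{3}f(m,0,1)=4$); only the form $\sum_{m=1}^{n-1}\mu_U(m,0,1)$ is valid there, exactly as you observe. (The displayed formula in the lemma also has a typo, $\mu_U(n,t-1,1)$ for $\mu_U(m,t-1,1)$, which your version silently corrects.)
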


\begin{proof}
We illustrate this for the $\mu_U$ case. Assume a permutation $\pi$ is of the form $**\ldots *1**$. The smallest separated pair must be up-oriented since $\pi \in M_U$. If $(1,2)$ is the smallest separated pair, then the $2$ must occur in the $n$th position. 

Now instead suppose $(k - 1, k)$ is the smallest separated pair where $k\ge 3$. Since $(1,2)$ is not a separated pair, $1$ and $2$ must be adjacent in $\pi$.  However, if $2$ is to the right of $1$,  we fall into the same situation as described in Lemma~\ref{bad_1_position} as there will not be room for both $k$ and a separator to the right of $2$.  Hence $\pi$ must be of the form $**\ldots *21**$.
 

This process continues with all entries smaller than $k$.  
Thus $\pi$ must be of the form $**\ldots *(k - 1)\ldots 21 * k$.  (Note this form is valid for $k=2$ as well.)  Now $\pi$ has only one (up) separated pair involving the entries $1,2,\ldots,k$  and $\pi$ has rev-tier $t$.  Thus entries at least as large as $k$ must form a rev-tier $t \geq 1$ permutation of length $n-k+1$, say $\pi'$ whose smallest entry is in the last position.  
Hence up to rescaling, and using Lemma~\ref{bad_1_position}, $\pi' \in M_D$.  
Therefore there are $\mu_D(n - k + 1, t, n - k + 1)$ possibilities for arranging these entries and thus for permutations in $\mu_U(n, t, n - 2)$. 
\end{proof}

Note that Lemma~\ref{rev_lemma} in the next section will show $\mu_U(n, t, n - 2) = \mu_D(n, t + 1, 3)$.

\subsection{Reversal identities}

Consider the {\it reversal map} on permutations which simply reverses the order of the elements in the permutation. That is, if $\alpha = a_1a_2\ldots a_n$, then we have $\alpha^{\rev} = a_na_{n - 1} \ldots a_2 a_1$ for all $\alpha \in S_n$. 

Let $M_U(n, t, k), M_D(n, t, k),$ and $N(n, k)$ denote the sets of permutations of length $n$, rev-tier $t$, where the $1$ is in the $k$th position with a maximal ISASP beginning with an up separated pair, beginning with a down separated pair, or having no separated pairs, respectively.
\begin{lemma}~\label{rev_lemma} 
With the notation as above we have:
\begin{equation*}
N^{\rev}(n, k) = N(n, n - k + 1) \hspace{.5 cm} \text{ and }
\end{equation*}
\begin{equation*}
 M_U^{\rev}(n, t, k) = M_D(n, t + 1, n - k + 1) \hspace{.5 cm} \text{ for all } t \geq 0.
\end{equation*}
\end{lemma}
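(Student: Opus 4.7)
The plan is to show that the reversal map itself provides the claimed bijections, by tracking three features of a permutation under reversal: the set of separated pairs (as pairs of values), their orientations, and the position of the entry $1$. First I would observe that $(i, i+1)$ is a separated pair in $\alpha$ exactly when some entry $k > i + 1$ lies between $i$ and $i+1$ in the one-line notation, a condition invariant under reversal; hence $\alpha$ and $\alpha^{\rev}$ have the same set of separated pairs. The orientation of each such pair, by contrast, depends on which of $i$ and $i+1$ comes first, so reversal flips every orientation. Finally, if $1$ occupies position $k$ in $\alpha$, it occupies position $n - k + 1$ in $\alpha^{\rev}$.

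The first identity is then immediate: $\alpha$ has no separated pairs iff $\alpha^{\rev}$ has none, so reversal restricts to a bijection $N(n, k) \to N(n, n - k + 1)$. For the second identity, let $L(\alpha)$ denote the maximum length of an ISASP in $\alpha$. Because reversal preserves separated pairs and flips orientations, it sends any ISASP of length $\ell$ in $\alpha$ to an ISASP of length $\ell$ in $\alpha^{\rev}$ with reversed starting orientation; in particular $L(\alpha) = L(\alpha^{\rev})$, and $\alpha \in M_U$ iff $\alpha^{\rev} \in M_D$. To connect $L$ to the rev-tier, note that for $\alpha \in M_U$ with $L := L(\alpha)$, every max length ISASP starts with an up pair, so any ISASP starting with a down pair has length at most $L - 1$; dropping the leading up pair of a maximum ISASP achieves this bound, giving $t_{\rev}(\alpha) = L - 1$. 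For $\beta \in M_D$, the max length ISASPs already begin with down pairs, so $t_{\rev}(\beta) = L(\beta)$. Therefore if $\alpha \in M_U(n, t, k)$, then $\alpha^{\rev} \in M_D$ with $L(\alpha^{\rev}) = L(\alpha) = t + 1$, and hence $t_{\rev}(\alpha^{\rev}) = t + 1$, with $1$ in position $n - k + 1$. Since reversal is an involution this gives the required bijection $M_U(n, t, k) \to M_D(n, t + 1, n - k + 1)$.

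The only step requiring real care is the arithmetic linking $L$ and the rev-tier in the two cases; once one observes that an $M_U$ permutation forces every max length ISASP to begin with an up pair (so its rev-tier is exactly $L - 1$) while an $M_D$ permutation has rev-tier equal to $L$ itself, the rest drops out of the invariance and orientation-flipping behavior of reversal.
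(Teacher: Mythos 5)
Your proof is correct and follows essentially the same route as the paper's: reversal preserves the set of separated pairs, flips every orientation, and sends position $k$ to position $n-k+1$, so it carries $M_U(n,t,k)$ onto $M_D(n,t+1,n-k+1)$. You are in fact more careful than the paper in pinning down the arithmetic $t_{\rev}(\alpha)=L(\alpha)-1$ for $\alpha\in M_U$ versus $t_{\rev}(\beta)=L(\beta)$ for $\beta\in M_D$, which the paper only gestures at.
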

The proof is simply to note that if a permutation $\pi$ has an ISASP with signature $UDUD\ldots$ then the reversal will have the same separated pairs but with reverse orientations.  Hence the signature of $\pi^{\rev}$ will be $DUDU\ldots$. The length of the longest ISASP overall does not change, however the rev-tier does since $\pi^{\rev}$ has a longer ISASP beginning with a down separated pair than $\pi$ does.

\begin{corollary}~\label{comp_help}
With the same notation as defined previously, for all $n,k$ we have
\begin{equation*}
\eta(n, k) = \eta(n, n - k + 1) \hspace{.5 cm} \text{ and } 
\end{equation*}
\begin{equation*}
\mu_U(n, t, k) = \mu_D(n, t + 1, n - k + 1).
\end{equation*}
\end{corollary}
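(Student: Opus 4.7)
The plan is to derive this corollary directly from the set-theoretic identities established in the preceding reversal lemma by taking cardinalities. The reversal map $\alpha \mapsto \alpha^{\rev}$ is a bijection (in fact an involution) on $S_n$, so it restricts to a bijection between any subset $S \subseteq S_n$ and its image $S^{\rev}$. Consequently $|S| = |S^{\rev}|$, which is the only fact from outside the lemma that is needed.

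Applying this observation to the first equality of the lemma, $N^{\rev}(n, k) = N(n, n - k + 1)$, the left-hand side has cardinality $|N(n,k)| = \eta(n,k)$ and the right-hand side has cardinality $\eta(n, n - k + 1)$, which gives the first claim. For the second equality, $M_U^{\rev}(n, t, k) = M_D(n, t + 1, n - k + 1)$, the same reasoning yields
\[
\mu_U(n, t, k) \;=\; |M_U(n, t, k)| \;=\; |M_U^{\rev}(n, t, k)| \;=\; |M_D(n, t + 1, n - k + 1)| \;=\; \mu_D(n, t + 1, n - k + 1),
\]
which is the second claim.

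There is essentially no obstacle at this stage: all of the substantive content — that reversal flips the orientation of every separated pair, so a permutation with maximal ISASP signature $UDUD\dots$ is sent to one of signature $DUDU\dots$, forcing the rev-tier to increase by exactly one — has already been carried out in the lemma. The only bookkeeping worth flagging is the conversion of the position of $1$ under reversal: an entry in position $k$ of $\pi$ occupies position $n - k + 1$ of $\pi^{\rev}$, which is exactly why the second argument of $\eta$ and the third argument of $\mu_D$ on the right-hand sides appear as $n - k + 1$ rather than $k$. Beyond tracking this shift, the corollary is an immediate numerical consequence of the bijective statements just proved.
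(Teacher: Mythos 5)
Your proposal is correct and matches the paper's argument, which likewise deduces the corollary from Lemma~\ref{rev_lemma} by observing that the reversal map is a bijection and taking cardinalities. The extra bookkeeping you note about the position of $1$ shifting to $n-k+1$ under reversal is consistent with how the paper sets up the lemma.
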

The proof is simply to use the fact that the reversal map is a bijection.

Note that, if one uses the above reversal identities with the original recurrences for say $\mu_U$ (or $\mu_D$), we recover the recurrences for $\mu_D$ (or $\mu_U$ respectively) from Theorem~\ref{recur_mu}.  

We note a couple of relations between $M^U$ and $M^D$ that will be used later.
\begin{lemma}~\label{simp}
\begin{equation*}
M^U(xw, y, \tfrac{1}{w}) = \frac{w}{y} M^D(x, y, w),
\end{equation*}
\begin{equation*}
M^D(xw, y, \tfrac{1}{w}) =  wy M^U(x, y, w).
\end{equation*}
\end{lemma}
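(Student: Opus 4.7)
The plan is a direct computation based on the reversal correspondence of Corollary~\ref{comp_help}, followed by a symmetry substitution that reduces the second identity to the first.

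For the first identity, I would begin by expanding the left-hand side from its defining series. The substitutions $x \mapsto xw$ and $w \mapsto 1/w$ convert the weight $x^n y^t w^{k-1}$ of a generic term into $x^n y^t w^{n-k+1}$. This new exponent on $w$ is precisely the weight of the \emph{reversed} permutation, since the $1$ in position $k$ lands in position $n-k+1$ under reversal. I would then apply Corollary~\ref{comp_help} in the form $\mu_U(n, t, k) = \mu_D(n, t+1, n-k+1)$ to replace the summand by a $\mu_D$-coefficient whose third argument matches the new exponent of $w$. Reindexing via $k' = n-k+1$ and $t' = t+1$ rewrites the sum as a $\mu_D$-series in $x^n y^{t'-1} w^{k'}$.

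Pulling out a factor of $w/y$ puts the sum in the form $\frac{w}{y}\sum \mu_D(n, t', k') x^n y^{t'} w^{k'-1}$, where $t'$ now starts at $1$. To identify this with $\frac{w}{y} M^D(x, y, w)$ I would invoke the boundary condition $\mu_D(n, 0, k) = 0$, which holds because every permutation in $M_D$ has a down separated pair in its maximum ISASP and so rev-tier at least $1$. Extending the sum to include $t' = 0$ is therefore free, and the first identity follows.

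For the second identity, rather than redo the bookkeeping I would substitute $x \mapsto xw$ and $w \mapsto 1/w$ into the identity just proved. The left side becomes $M^U(x, y, w)$ and the right side becomes $\tfrac{1/w}{y} M^D(xw, y, 1/w)$; solving for $M^D(xw, y, 1/w)$ yields $wy \cdot M^U(x, y, w)$, which is the desired identity.

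The only point that requires any care is the simultaneous reindexing of $t$ and $k$ together with the vanishing $\mu_D(n, 0, k) = 0$; both are immediate from the definitions, so at the generating-function level the lemma is essentially a repackaging of the reversal correspondence from Corollary~\ref{comp_help}.
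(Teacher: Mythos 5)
Your proposal is correct and is exactly the argument the paper intends: the paper's proof of Lemma~\ref{simp} is the one-line remark that ``applying Corollary~\ref{comp_help}, the proof is simply computational,'' and your expansion of the series, the substitution $x\mapsto xw$, $w\mapsto 1/w$, the reindexing via $\mu_U(n,t,k)=\mu_D(n,t+1,n-k+1)$, and the use of $\mu_D(n,0,k)=0$ fill in precisely that computation. Your derivation of the second identity by substituting back into the first is a clean way to finish and is consistent with the paper's symmetric treatment.
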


Applying Corollary~\ref{comp_help}, the proof of Lemma~\ref{simp} is simply computational.

Setting $w = 1$ we obtain the following corollary.
\begin{corollary}~\label{cor_mu}
$M^U(x, y, 1) = \frac{1}{y} M^D(x, y, 1).$
\end{corollary}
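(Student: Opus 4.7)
The plan is to obtain the corollary as a direct specialization of Lemma~\ref{simp} at $w=1$. Concretely, I would take the first identity
\[
M^U(xw, y, \tfrac{1}{w}) = \frac{w}{y}\, M^D(x, y, w)
\]
and substitute $w = 1$. On the left side, $xw$ collapses to $x$ and $1/w$ collapses to $1$, giving $M^U(x, y, 1)$. On the right side, the prefactor $w/y$ collapses to $1/y$ and the generating function becomes $M^D(x, y, 1)$, yielding exactly the claimed identity.

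As a sanity check, one should verify the two identities in Lemma~\ref{simp} are consistent at $w=1$. Substituting $w=1$ into the second identity gives $M^D(x,y,1) = y\, M^U(x,y,1)$, which is equivalent (after dividing by $y$, noting that $M^U, M^D$ have no nonpositive powers of $y$ so division is formal-series-legitimate) to the first specialization. So both identities collapse to the same corollary, providing a built-in check.

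There is no genuine obstacle here: once Lemma~\ref{simp} is in hand, the corollary is a one-line substitution. The only subtlety worth mentioning explicitly is that the specialization $w \mapsto 1$ is legitimate for the formal power series $M^U(x,y,w)$ and $M^D(x,y,w)$ because they have bounded $w$-degree at each fixed $x$-degree $n$ (namely at most $w^{n-1}$, since the position index $k$ of the $1$ satisfies $1 \le k \le n$); hence setting $w = 1$ produces a well-defined element of $\mathbb{Z}[[x,y]]$ and the identity remains valid termwise.
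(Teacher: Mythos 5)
Your proof is correct and is exactly the paper's argument: the paper obtains Corollary~\ref{cor_mu} by "setting $w=1$" in Lemma~\ref{simp}, which is precisely your one-line substitution. Your added remarks on the consistency of the two identities at $w=1$ and on the legitimacy of the specialization are fine but not needed.
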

Since $F(x, y, w) = F_N(x, w) + M^U(x, y, w) + M^D(x, y, w)$, Corollary~\ref{cor_mu} gives us
\begin{corollary} 
\[
F(x, y, 1) = F_N(x, 1) + \left(1 + y\right) M^U(x, y, 1) = F_N(x, 1) + \left(1 +  \frac{1}{y}\right) M^D(x, y, 1).
\]
\end{corollary}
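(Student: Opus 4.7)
The plan is to derive this corollary as an immediate consequence of two facts already established in the paper: the decomposition $F(x, y, w) = F_N(x, w) + M^U(x, y, w) + M^D(x, y, w)$ arising from the disjoint union $S = N \cup M_U \cup M_D$, and the specialization identity $M^U(x, y, 1) = \tfrac{1}{y} M^D(x, y, 1)$ from Corollary~\ref{cor_mu}.

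First I would substitute $w = 1$ into the decomposition formula to obtain
\[
F(x, y, 1) = F_N(x, 1) + M^U(x, y, 1) + M^D(x, y, 1).
\]
Then I would apply Corollary~\ref{cor_mu} in two different forms. Rewriting it as $M^D(x, y, 1) = y \cdot M^U(x, y, 1)$ and substituting into the decomposition yields
\[
F(x, y, 1) = F_N(x, 1) + M^U(x, y, 1) + y \cdot M^U(x, y, 1) = F_N(x, 1) + (1+y) M^U(x, y, 1),
\]
which gives the first equality. Rewriting Corollary~\ref{cor_mu} instead as $M^U(x, y, 1) = \tfrac{1}{y} M^D(x, y, 1)$ and substituting yields
\[
F(x, y, 1) = F_N(x, 1) + \tfrac{1}{y} M^D(x, y, 1) + M^D(x, y, 1) = F_N(x, 1) + \left(1 + \tfrac{1}{y}\right) M^D(x, y, 1),
\]
which gives the second equality.

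There is no real obstacle here: all the bijective content sits in Lemma~\ref{rev_lemma} and its corollaries, and this statement is purely a substitution exercise. The one small thing worth being careful about is that the manipulation involves dividing by $y$, but since all generating functions in question are elements of $\mathbb{Z}[[x,w]][y]$ and Corollary~\ref{cor_mu} tells us $M^D(x,y,1)$ is divisible by $y$ as a power series in $y$ (every permutation in $M_D$ has rev-tier at least one), the expression $\tfrac{1}{y} M^D(x, y, 1)$ is a well-defined formal power series, and the two formulas are genuine identities in the appropriate ring of formal power series.
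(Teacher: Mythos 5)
Your proof is correct and follows exactly the paper's own route: substitute $w=1$ into the decomposition $F = F^N + M^U + M^D$ and apply Corollary~\ref{cor_mu} in both directions. Your added remark on why division by $y$ is legitimate (every permutation in $M_D$ has rev-tier at least one) is a small but welcome clarification that the paper leaves implicit.
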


Next we give relations for $M^U$ and $M^D$ that allow one to calculate these generating functions explicitly.

\subsection{$\mu_U(n, 0, k)$}

Since we already have $\eta(n,k) =\binom{n - 1}{k  - 1}$, we will now use
\begin{equation*}
\mu_U(n + 1, 0, k) = \sum_{i \ge k + 1} \eta(n, i) +\sum_{i \ge k - 1} \mu_U(n, 0 ,i) \hspace{.5 cm} \text{ for all } n \geq 2
\end{equation*}
to derive the formula for $\mu_U(n, 0, k)$. Multiplying each side of the recurrence by $w^{k - 1}$  and summing on all values of $k$ we have:
\[
\sum_{k = 1}^{n + 1}\mu_U(n + 1, 0, k) w^{k - 1} = \sum_{i = 2}^{n} \eta(n, i) \frac{1 - w^{i - 1}}{1 - w}  + \sum_{i = 1}^{n} \mu_U(n, 0 ,i) \frac{1 - w^{i + 1}}{1 - w}.
\]
We now evaluate the first sum on the right given our knowledge of $\eta(n, k)$
\begin{align*}
\sum_{i = 2}^{n} \eta(n, i) \frac{1 - w^{i - 1}}{1 - w} &= \frac{1}{1 - w}
\left( \sum_{i = 2}^{n} \eta(n, i) - \sum_{i = 2}^{n} \eta(n, i) w^{i - 1} \right) \\
&= \frac{1}{1 - w}
\left( \sum_{i = 2}^{n} \binom{n - 1}{i - 1} - \sum_{i = 2}^{n} \binom{n - 1}{i - 1} w^{i - 1} \right)\\
&=\frac{1}{1 - w} \left( 2^{n - 1} - 1 - (1 + w)^{n - 1} + 1\right).
\end{align*}
Let 
\[
M_{n, 0}^U(w) =\sum_{k = 1}^n \mu_U(n, 0, k)w^{k - 1}.
\]
The above equation becomes
\begin{equation}
\label{M^U}
M_{n  + 1, 0}^U(w) = \frac{1}{1 - w}
\left( 2^{n - 1} -  (1 + w)^{n - 1} + M_{n, 0}^U(1) - w^2M_{n, 0}^U(w)\right).
\end{equation}

Now let
\[
M_0^U(x, w) = \sum_{n = 3}^{\infty} M_{n, 0}^U(w) x^n
\]
and multiply by $x^n$, and sum the previous recurrence over all positive integers $n$ to obtain:
\[
\frac{1}{x}  M_0^U(x, w) = \frac{1}{1 - w}\left( \frac{x}{1 - 2x} -  \frac{x}{ 1 - x(1 + w)} + M_0^U( x, 1) - w^2M_0^U(x, w)\right).
\]
Solving for $M_0^U(x, w)$ gives
\[
\left( 1 + \frac{xw^2}{1 - w}\right)M_0^U(x,w) = \frac{x}{1 - w}\left( \frac{x}{1 - 2x} -  \frac{x}{ 1 - x(1 + w)} + M_0^U( x, 1)\right).
\]
Employing the kernel method  so that the left hand side vanishes means setting
\[
1 + \frac{xw^2}{1 - w} = 0 {\rm~or~} 1 - w  + xw^2 = 0 {\rm~or~} w = \frac{1 \pm \sqrt{1 - 4x}}{2x}.
\]
Let $\displaystyle{\hat{w} = \frac{1 - \sqrt{1 - 4x}}{2x}}$ (the generating function for the Catalan numbers), then
\begin{equation}
M_0^U( x, 1)  = -\frac{x}{1 - 2x} +  \frac{x}{ 1 - x(1 + \hat{w})} = -\frac{x}{1 - 2x} -1 + \frac{1 - \sqrt{1 - 4x}}{2x}.
\end{equation}

By using the reversal identities, we have the rev-tier $1$ component of $M^D(x, y, 1)$, namely
\begin{equation*}
M_1^D(x, y, 1) = y M_0^U(x, 1).
\end{equation*}






\subsection{$\mu_U(n, t, k), t\ge 1$}
Notice
\begin{equation*}
 \mu_U(n + 1, t, k) = \sum_{i \ge k + 1} \mu_D(n, t, i) + \sum_{i \ge k - 1} \mu_U(n, t ,i).
\end{equation*}
Hence
\begin{align*}
\sum_{k = 1}^{n + 1} \mu_U(n + 1, t, k)w^{k - 1} &= \sum_{k = 1}^{n + 1}\sum_{i = k + 1}^n \mu_D(n, t, i)w^{k - 1} + \sum_{k = 1}^{n + 1}\sum_{i = k - 1}^n \mu_U(n, t ,i)w^{k - 1} \\
&= \sum_{i = 2}^{n}\sum_{k = 1}^{i - 1} \mu_D(n, t, i)w^{k - 1} + \sum_{i = 1}^{n}\sum_{k = 1}^{i + 1} \mu_U(n, t ,i)w^{k - 1}\\
&= \sum_{i = 2}^{n}\mu_D(n, t, i) \frac{1 - w^{i - 1}}{1 - w} + \sum_{i = 1}^{n} \mu_U(n, t ,i)\frac{1 - w^{i + 1}}{1 - w}
\end{align*}
\[
=\frac{1}{1 - w}\left( \sum_{i = 2}^{n}\mu_D(n, t, i)
 - \sum_{i = 2}^{n}\mu_D(n, t, i)w^{i - 1} + \sum_{i = 1}^{n} \mu_U(n, t ,i) - \sum_{i = 1}^{n} \mu_U(n, t ,i)w^{i + 1}\right).
\]
Let
\[
M_{n, t}^D(w) = \sum_{k = 1}^n \mu_D(n, t, k)w^{k - 1} \hspace{1.5cm} \text{ and } \hspace{1.5cm} M_{n, t}^U(w) = \sum_{k = 1}^n \mu_U(n, t, k)w^{k - 1}.
\]
Then since the $i = 1$ terms from the $\mu_D$ are zero, we have
\begin{equation}
\label{M_n^U}
M_{n + 1, t}^U(w) = \frac{1}{1 - w}\left( M_{n, t}^D(1) - M_{n, t}^D(w) + M_{n, t}^U(1) - w^2M_{n, t}^U(w)\right).
\end{equation}
Now define  
\[
M_n^{U}(y, w) = \sum_{t \ge 0}M_{n, t}^{U}(w)y^t \text{ and } ~ M_n^{D}(y, w) = \sum_{t \ge 0}M_{n, t}^{D}(w)y^t.
\]
Note that the constant term (in $y$) for $M^D$ is zero, and the constant term for $M^U$ was derived above.

Recall Equation~\ref{M^U} said $\displaystyle{M_{n + 1, 0}^U(w)=\tfrac{1}{1 - w} \left(2^{n - 1} - (1 + w)^{n - 1} + M_{n, 0}^U(1) - w^2M_{m, 0}^U(w) \right)}$. Adding this equation to Equation~\ref{M_n^U} gives us
\[
M_{n + 1}^U(y, w) = \frac{1}{1 - w} \left( M_n^D(y, 1) - M_n^D(y, w) + M_n^U(y, 1) - w^2M_{n}^U(y, w) + 2^{n - 1} - (1 + w)^{n - 1}\right).
\]
Now multiply by $x^n$, and sum on all $n\ge 1$ to get
\begin{align*}
\sum_{n = 1}^{\infty}M_{n + 1}^U(y, w)x^n &=  \frac{1}{1 - w} \left( \sum_{n = 1}^{\infty}M_n^D(y, 1)x^n - \sum_{n = 1}^{\infty}M_n^D(y, w)x^n 
+ \sum_{n = 1}^{\infty}M_n^U(y, 1)x^n - w^2\sum_{n = 1}^{\infty}M_{n}^U(y, w)x^n \right) \\
&+  \frac{1}{1 - w}\sum_{n = 1}^{\infty}(2^{n - 1} - (1 + w)^{n - 1})x^n.
\end{align*}
Let 
\[
M^{U}(x, y, w) = \sum_{n = 1}^{\infty} M_{n}^{U}(y, w)x^n \text{ and } ~ M^{D}(x, y, w) = \sum_{n = 1}^{\infty} M_{n}^{D}(y, w)x^n.
\]
Then
\begin{align*}
 M^U(x, y, w) &=  \frac{x}{1 - w} \left( M^D(x, y, 1) - M^D(x, y, w) + M^U(x, y, 1) - w^2M^U(x, y, w) \right) \\
&+ \frac{x}{1 - w}\left( \frac{x}{1 - 2x} - \frac{x}{1 - x(1 + w)} \right).
\end{align*}
Combining the $M^U(x, y, w)$  terms we obtain  
\[ 
\left(1 + \frac{xw^2}{1 - w}\right)M^U(x, y, w) =  \frac{x}{1 - w} \left( M^D(x, y, 1) - M^D(x, y, w) + M^U(x, y, 1) +  \frac{x}{1 - 2x} - \frac{x}{1 - x(1 + w)} \right).
\]
We may simplify a bit replacing $M^D(x, y, 1)$ with $yM^U(x, y, 1)$ to get
\[
\left( \frac{1 - w}{x}\right) \left(1 + \frac{xw^2}{1 - w}\right)M^U(x, y, w) =  \left( (1 + y)M^U(x, y, 1) - M^D(x, y, w) +  \frac{x}{1 - 2x} - \frac{x}{1 - x(1 + w)} \right).
\]

Note that using the reversal map (or deriving it independently), the equation we get for $M^D$ is
\begin{equation*}
\begin{split}
\left(1 - \frac{x}{1 - w}\right)M^D(x, y, w) &=
\frac{x}{1 - w}\left( w^2y^2 M^U(x, y, w) - w y^2 M^U(xw, y, 1) - w M^D(xw, y, 1) \right) \\
&+ \frac{xy}{1 - w}\left( \frac{w^2x}{1 - x(1 + w)} - \frac{w^2x}{1 - 2wx}\right).
\end{split}
\end{equation*}
Or
\begin{equation*}
\begin{split}
M^D(x, y, w) &=
\frac{x}{1 - w - x}\left( w^2y^2 M^U(x, y, w) - w y(1 + y) M^U(xw, y, 1) \right) \\
&+ \frac{xy}{1 - w - x}\left( \frac{w^2x}{1 - x(1 + w)} - \frac{w^2x}{1 - 2wx}\right).
\end{split}
\end{equation*}
Substituting this into the expression for $M^U(x,y,w)$, we have
\begin{equation}\label{eqM1}
\begin{split}
K(x,y,w)M^U(x, y, w)&= \frac{x(1 + y)}{1-w}M^U(x, y, 1) +\frac{x^2y(1+y)w}{(1-w)(1-x-w)}M^U(xw, y, 1) \\
&+\frac{x^3(2w^2x^2y-w^2xy+2w^2x+2wx^2-2wx-w-x+1)}{(1-2x)(1-x-w)(1-x-xw)(1-2xw)},
\end{split}
\end{equation}
where the kernel is defined by $K(x,y,w)=1+\frac{xw^2}{1-w}+\frac{x^2y^2w^2}{(1 - w)(1-x-w)}$. We again apply the kernel method to this functional equation. The equation $K(x,y,w)=0$ can be written as
$$(w-1)^2-(w+1)(w-1)^2x+w^2(y^2-1)x^2=0.$$
Define
$$x_0=x_0(y,w)=(1-w)\frac{1-w^2-\sqrt{(1+w^2)^2-4w^2y^2}}{2w^2(y^2-1)}=1-w+y^2w^2-y^2w^3+\cdots.$$
Clearly, $K(x_0,y,w)=0$. Also,
$$x_0=\frac{1-w}{1-y^2}\left(1-\frac{y^2}{1+w^2}C\left(\frac{w^2y^2}{(1+w^2)^2}\right)\right),$$
where $C(t)=\frac{1-\sqrt{1-4t}}{2t}$ is again the generating function for the Catalan numbers $\frac{1}{n+1}\binom{2n}{n}$. Thus, there exists a power series $w_0=w_0(x,y)$ around $x=0$ such that $x_0(y,w_0(x,y))=x$. That is, there exists a power series $w_0=w_0(x,y)$ in $x$ such that
$k(x,y,w_0(x,y))=0$.
The first terms of $w_0$ can be evaluated as
\begin{align*}
w_0&=1+y'x+y'(1+y')x^2+y'(3+5y'+2y'^2)\frac{x^3}{2}+y'(5+12y'+9y'^2+2y'^3)\frac{x^4}{2}\\
&+y'(35+112y'+125y'^2+56y'^3+8y'^4)\frac{x^5}{8}+\cdots
\end{align*}
where $y'=\sqrt{1-y^2}$. Note that when $y=0$ we have $w_0(x,0)=C(x)$.

Define $A(x,y)=\frac{x^2(w_0-1)(2w_0^2x^2y-w_0^2xy+2w_0^2x+2w_0x^2-2w_0x-w_0-x+1)}{(1+y)(1-2x)(1-x-w_0)(1-x-xw_0)(1-2xw_0)}$, and $B(x,y)=\frac{w_0}{1-x-w_0}$. Thus, Equation~\eqref{eqM1} gives
\begin{align}
\label{eqMr}
M^U(x, y, 1)=A(x,y)-xyB(x,y)M^U(xw_0(x,y), y, 1).
\end{align}

Define $M^U_j(x)=\frac{d^j}{dy^j}M^U(x,y,1)\mid_{y=0}$. Then by differentiating Equation~\eqref{eqMr} at $y=0$, we obtain
\begin{align}
\label{eqMr1}
M^U_j(x)=\frac{d^j}{dy^j}A(x,y)\mid_{y=0}-x\sum_{i=0}^{j-1}\binom{j-1}{i}\frac{d^i}{dy^i}B(x,y)\mid_{y=0}
\frac{d^{j-1-i}}{dy^{j-1-i}}(M^U(xw_0(x,y), y, 1))\mid_{y=0}.
\end{align}
Note that 
\begin{align*}
&\frac{d^m}{dy^m}(M^U(xw_0(x,y), y, 1))\\
&=\frac{d^{m-1}}{dy^{m-1}}\left(x\frac{d}{dy}w_0(x,y)\frac{d}{ds_1}M^U(s_1, y, 1)\mid_{s_1=xw(x,y)}+\frac{d}{ds_2}M^U(xw(x,y),s_2,1)\mid_{s_2=y}\right).
\end{align*}
Thus, by induction on $m$, the expression $\frac{d^m}{dy^m}(M^U(xw_0(x,y), y, 1))$ can be written in terms of $M^U_j(x)$ and derivatives of $M^U(x,0,1)=M^U_0(x)$. Hence, Equation~\eqref{eqMr1} defines a procedure for finding an explicit formula for the generating function $\frac{1}{j!}M^U_j(x)$ which is the coefficient of $y^j$ in the generating function $M^U(x,y,1)$. For instance, we apply our procedure for $j=0,1,2$. 

{\bf Case $M^U_0(x)$}: When $y=0$ (here $w_0(x,0)=C(x)$), after simplification by using the fact that $C(x)=1+xC^2(x)$, we see that  
$M^U_0(x)=A(x,0)=C(x)-\frac{1-x}{1-2x}$, which leads to the following result.
\begin{corollary}
We have 
$$M^U_0(x)=C(x)-\frac{1-x}{1-2x} = x^3 + 6x^4 + 26x^5 + 100x^6 + 365x^7 + \cdots.$$
\end{corollary}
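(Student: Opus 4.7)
The plan is to specialize equation~\eqref{eqMr} at $y = 0$. Since the second summand on the right carries an explicit factor of $y$, it vanishes at $y = 0$, giving $M^U_0(x) = M^U(x,0,1) = A(x,0)$. The remark immediately preceding the statement records $w_0(x,0) = C(x)$, so the remaining task is a purely algebraic simplification of $A(x, C(x))$ using the Catalan identity $C = 1 + xC^2$.

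Writing $C = C(x)$, the useful consequences of $C = 1 + xC^2$ are $C - 1 = xC^2$, $1 - xC = 1/C$, $1 - 2xC = (2-C)/C$, $1 - x - xC = 1/C^2$ (the last following from $x(1+C) = (C^2-1)/C^2$), and $1 - x - C = -x(1+C^2)$. The key substitution is in the numerator polynomial $2w_0^2x + 2w_0 x^2 - 2w_0 x - w_0 - x + 1$; evaluated at $w_0 = C$ and regrouped via $C - 1 = xC^2$, it factors cleanly as $x(1+C^2)(2xC - 1)$. Plugging these into $A(x,0)$, the factors $(1+C^2)$ and $(2-C)$ cancel between numerator and denominator, and the expression collapses to $A(x,0) = \dfrac{x^3 C^4}{1 - 2x}$.

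To match the claimed closed form, I would combine $C - \dfrac{1-x}{1-2x}$ over a common denominator to obtain $\dfrac{x(C-1)^2}{1 - 2x}$, and then use $(C-1)^2 = x^2 C^4$ to recover $\dfrac{x^3 C^4}{1-2x}$. The series expansion $x^3 + 6x^4 + 26x^5 + 100x^6 + 365x^7 + \cdots$ follows by subtracting $(1-x)/(1-2x) = 1 + \sum_{n \ge 1} 2^{n-1}x^n$ from the Catalan series; as an independent sanity check, the same coefficients arise as $C_n - 2^{n-1}$, counting length-$n$ rev-tier-$0$ up-oriented permutations (Catalan-many $231$-avoiders minus the $\eta(n) = 2^{n-1}$ elements of $N$ supplied by Theorem~\ref{rotem_1}). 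The main technical hurdle is spotting the factorization $x(1+C^2)(2xC - 1)$ of the numerator polynomial; once that is identified, the cancellations against the five-factor denominator fall out mechanically.
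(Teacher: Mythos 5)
Your proof is correct and takes essentially the same route as the paper: set $y=0$ in the functional equation~\eqref{eqMr} so that the term carrying the explicit factor $xy$ vanishes, leaving $M^U_0(x)=A(x,0)$ with $w_0(x,0)=C(x)$, and then simplify using $C=1+xC^2$. The paper states this simplification without detail, and your explicit steps --- the factorization of the numerator polynomial as $x(1+C^2)(2xC-1)$, the cancellation of $(1+C^2)$ and $(2-C)$, and the identity $x^3C^4/(1-2x)=C(x)-\tfrac{1-x}{1-2x}$ via $(C-1)^2=x^2C^4$ --- all check out.
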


{\bf Case $M^U_1(x)$}: By \eqref{eqMr1}, and $w_0(x,0)=C(x)$, we have
\begin{align*}
M^U_1(x)&=\frac{d}{dy}A(x,y)\mid_{y=0}-xA(xC(x),0)B(x,0)\\
&=\frac{xC(x)(\frac{1-xC(x)}{1-2xC(x)}-C(xC(x)))}{1-x-C(x)}
+\frac{x^4C^6(x)(1-x+x(2x-3)C(x))}{(1-2x)(1-x-C(x))(1-2xC(x))}.
\end{align*}

\begin{corollary}
Let $t_0=C(x)$, and $t_1=C(xC(x))$. We have
\begin{align*}
M^U_1(x)&=\frac{xt_0(\frac{1-xt_0}{1-2xt_0}-t_1)}{1-x-t_0}
+\frac{x^4t_0^6(x)(1-x+x(2x-3)t_0)}{(1-2x)(1-x-t_0)(1-2xt_0)}\\
&=2x^4+21x^5+148x^6+884x^7+4852x^8+25407x^9+129480x^{10}+649576x^{11}+\cdots.
\end{align*}
\end{corollary}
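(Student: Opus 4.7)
The plan is to specialize Equation~\eqref{eqMr1} to $j=1$, evaluate the resulting derivatives at $y=0$, and simplify using the Catalan functional equation satisfied by $t_0$. For $j=1$ the sum over $i$ in \eqref{eqMr1} collapses to the single term $i=0$, yielding
\[
M^U_1(x) \;=\; \frac{d}{dy}A(x,y)\Big|_{y=0} \;-\; x\,B(x,0)\,M^U\bigl(xw_0(x,0),0,1\bigr).
\]
From the preceding case we already know $w_0(x,0)=C(x)=t_0$ and $M^U_0(x)=A(x,0)=C(x)-\tfrac{1-x}{1-2x}$, so $M^U\bigl(xt_0,0,1\bigr)=t_1-\tfrac{1-xt_0}{1-2xt_0}$. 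Substituting $B(x,0)=\tfrac{t_0}{1-x-t_0}$ produces precisely the first summand $\tfrac{xt_0\bigl(\frac{1-xt_0}{1-2xt_0}-t_1\bigr)}{1-x-t_0}$ of the claimed expression.

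For the second summand I will compute $\tfrac{d}{dy}A(x,y)\big|_{y=0}$. The key observation is that $\partial_y w_0(x,y)\big|_{y=0}=0$: because the kernel $K(x,y,w)=1+\tfrac{xw^2}{1-w}+\tfrac{x^2y^2w^2}{(1-w)(1-x-w)}$ depends on $y$ only through $y^2$, implicit differentiation of $K(x,y,w_0(x,y))=0$ at $y=0$ forces $\partial_y w_0|_{y=0}=0$ (using $\partial_w K(x,0,t_0)\neq 0$). Hence the total $y$-derivative of $A$ at $y=0$ coincides with the explicit partial derivative in $y$ with $w_0$ held fixed at $t_0$; only the affine-in-$y$ factor in the numerator of $A$ and the $(1+y)$ factor in the denominator contribute, so this step is a short quotient-rule computation producing a rational expression in $x$ and $t_0$.

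Everything remaining is algebraic simplification. Using the Catalan identity $t_0=1+xt_0^2$ in the equivalent forms $1-t_0=-xt_0^2$, $1-x-t_0=-x(1+t_0^2)$, and $w(1-2xw)=2-w$ at $w=t_0$, the spurious denominator factor $(1-x-xt_0)$ inherited from $A$ must be absorbed against the numerator while the power of $t_0$ in the numerator is boosted from $t_0^2$ up to the advertised $t_0^6$, delivering $\tfrac{x^4 t_0^6\bigl(1-x+x(2x-3)t_0\bigr)}{(1-2x)(1-x-t_0)(1-2xt_0)}$. Adding the two summands yields the stated closed form, and the series $2x^4+21x^5+148x^6+884x^7+\cdots$ then follows from direct Taylor expansion (for example via Lagrange inversion applied successively to $t_0$ and to $t_1$). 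The main obstacle is precisely this last consolidation: multiple rational factors built from $1+t_0^2$, $2-t_0$, and $1-x-xt_0$ must cancel simultaneously, and only repeated invocation of the Catalan identity makes the arithmetic transparent.
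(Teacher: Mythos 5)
Your proposal is correct and follows exactly the paper's route: it is Equation~\eqref{eqMr1} specialized to $j=1$, with the single $i=0$ term giving $-xB(x,0)M^U(xC(x),0,1)=-xA(xC(x),0)B(x,0)$ (the first summand) and $\frac{d}{dy}A(x,y)\vert_{y=0}$ simplifying to the second summand via the Catalan identities (in particular $1-x-xt_0=t_0^{-2}$ and $t_0-1=xt_0^2$). Your added observation that $\partial_y w_0\vert_{y=0}=0$ because the kernel depends on $y$ only through $y^2$ is a correct justification of a step the paper leaves implicit.
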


{\bf Case $M^U_2(x)$}: Similarly, we can obtain the next case.
\begin{corollary}
Let $t_0=C(x)$, $t_1=C(xC(x))$, and $t_2=C(xC(x)C(xC(x)))$. We have
\begin{align*}
M^U_2(x)&=\frac{2x^2\left(t_1t_0^2(2x-1)(t_0-2)(xt_0+x-1)^2(xt_0t_1+xt_0-1)t_2+\sum_{i=0}^4H_i(x)x^i\right)}{(xt_0t_1+xt_0-1)(xt_0+t_1-1)(t_0^2+xt_0-3t_0-2x+2)(xt_0+x-1)^2(2x-1)}\\
&=10x^5+160x^6+1636x^7+13704x^8+102876x^9+722772x^{10}+4867904x^{11}+\cdots,
\end{align*}
where 
\begin{align*}
H_0(x)&=(2-t_0)(t_0^2+2t_0t_1-2t_0-t_1+1),\\
H_1(x)&=-2t_0^4t_1^2+3t_0^4t_1+6t_0^3t_1^2+3t_0^4-5t_0^2t_1^2-12t_0^3-9t_0^2t_1+2t_0t_1^2+11t_0^2-7t_0t_1+3t_0+4t_1-4,\\
H_2(x)&=2t_0^5t_1^2-6t_0^5t_1-2t_0^4t_1^2-3t_0^5-7t_0^3t_1^2+12t_0^4+14t_0^3t_1+7t_0^2t_1^2-3t_0^3+18t_0^2t_1-4t_0t_1^2-18t_0^2\\
&+3t_0t_1+5t_0-2t_1+2,\\
H_3(x)&=t_0\bigl(3t_0^5t_1-2t_0^4t_1^2+t_0^5+6t_0^4t_1+3t_0^3t_1^2-4t_0^4-14t_0^3t_1+4t_0^2t_1^2-6t_0^3-15t_0^2t_1-3t_0t_1^2+15t_0^2\\
&-12t_0t_1+2t_1^2+6t_0-4\bigr),\\
H_4(x)&=-t_0^2(4t_0^4t_1-3t_0^3t_1-3t_0^3-10t_0^2t_1+2t_0^2-t_0t_1+7t_0-2t_1-2).
\end{align*}
\end{corollary}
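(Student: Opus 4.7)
The plan is to specialize Equation~\eqref{eqMr1} to $j=2$ and evaluate each ingredient by the same procedure that produced $M^U_1$ in the previous case. For $j=2$ the recurrence reads
\begin{equation*}
M^U_2(x) = A_{yy}(x,0) - xB(x,0)\,\frac{d}{dy}\bigl(M^U(xw_0(x,y),y,1)\bigr)\Big|_{y=0} - xB_y(x,0)\,M^U_0(xC(x)),
\end{equation*}
so I need the Taylor data $A_{yy}(x,0)$, $B(x,0)$, $B_y(x,0)$, the power series values $w_0(x,0)$ and $(w_0)_y(x,0)$, and the chain-rule expansion of the composite derivative. The closed forms for $M^U_0$ and $M^U_1$ from the previous two corollaries feed directly into the formula.

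First I would compute $w_0(x,0)$ and $(w_0)_y(x,0)$ by implicitly differentiating the kernel equation $K(x,y,w_0(x,y))=0$. At $y=0$ the kernel reduces to $1-w_0+xw_0^2=0$, giving $w_0(x,0)=C(x)=t_0$; a single implicit differentiation in $y$ at $y=0$ then yields $(w_0)_y(x,0)$ as a rational function of $x$ and $t_0$. Substituting these into the definitions of $A(x,y)$ and $B(x,y)$ and differentiating once or twice at $y=0$ gives $A_{yy}(x,0)$, $B(x,0)$, and $B_y(x,0)$ as rational expressions in $x$ and $t_0$, simplified repeatedly via the Catalan identity $t_0 = 1 + xt_0^2$.

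The most delicate ingredient is the composite derivative, which the chain rule expands as
\begin{equation*}
\frac{d}{dy}\bigl(M^U(xw_0(x,y),y,1)\bigr)\Big|_{y=0} = x(w_0)_y(x,0)\,(M^U_0)'(xC(x)) + M^U_1(xC(x)).
\end{equation*}
The term $(M^U_0)'(xC(x))$ is computed by differentiating the closed form $M^U_0(x)=C(x)-\tfrac{1-x}{1-2x}$ and substituting $xC(x)$ for $x$. The evaluation $M^U_1(xC(x))$ is where the nested Catalan composition $t_2=C(xC(x)C(xC(x)))$ first enters: the $t_0=C(x)$ appearing in the formula for $M^U_1$ evaluates to $C(xC(x))=t_1$, and the $t_1=C(xC(x))$ evaluates to $C(xC(x)\cdot C(xC(x)))=t_2$ under the substitution $x \mapsto xC(x)$.

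Assembling all the pieces and clearing a common denominator produces a rational expression in $x, t_0, t_1, t_2$. The main obstacle is the algebraic simplification at this step: one must apply the three polynomial relations $t_0 = 1 + xt_0^2$, $t_1 = 1 + xt_0t_1^2$, and $t_2 = 1 + xt_0t_1t_2^2$ repeatedly to reduce high powers, then group terms by degree in $x$ to factor the denominator as the stated product and extract the polynomials $H_0(x),\ldots,H_4(x)$ in the numerator (with the separate $t_2$-linear term emerging from the $M^U_1(xC(x))$ contribution). This is a mechanical but intricate manipulation that is best carried out with computer algebra. As a sanity check, I would expand the final rational function as a power series in $x$ and confirm agreement with the listed coefficients $10x^5 + 160x^6 + 1636x^7 + 13704x^8 + \cdots$.
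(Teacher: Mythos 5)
Your proposal is correct and follows exactly the paper's intended route: the paper's proof of this corollary is just the remark ``Similarly, we can obtain the next case,'' meaning one specializes Equation~\eqref{eqMr1} to $j=2$ and expands the composite derivative by the chain rule, precisely as you describe (including the correct identification of where $t_2$ enters, via $M^U_1(xC(x))$). The only detail you will discover in carrying it out is that $(w_0)_y(x,0)=0$ (the kernel depends on $y$ only through $y^2$), so the $(M^U_0)'(xC(x))$ and $B_y(x,0)$ terms vanish; this does not affect the validity of your argument.
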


Recall that the generating function $F(x, y, w) = F^N(x, w) + (1 + y)M^U(x, y, w)$, and hence for any $t\ge 1$ the number of permutations of length $n$, and rev-tier $t$ is the $y^t$ term of $(1 + y)M^U(x, y, 1)$. Thus 
\[
\sum_{k = 1}^n f(n, t, k) = \frac{M_t^U(x)}{t!} + \frac{M_{t - 1}^U(x)}{(t - 1)!} \hspace{.5 cm} \text{ for all } t \geq 1.
\]
For example, we obtain the following corollary:
\begin{corollary}
The generating function for  the number of permutations of length $n$, and rev-tier $2$ is
\[ 
\frac{M_2^U(x)}{2!} + M_{1}^U = 2x^4 + 26x^5 + 228x^6 + 1702x^7 + \cdots.
\]
\end{corollary}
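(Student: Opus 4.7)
The plan is to read off the $y^2$ coefficient of $F(x,y,1)$ from the decomposition already established in the previous subsections. First I would recall that $F(x,y,w) = F^N(x,w) + M^U(x,y,w) + M^D(x,y,w)$ and that by Corollary~\ref{cor_mu} we have $M^D(x,y,1) = y\, M^U(x,y,1)$, so that $F(x,y,1) = F^N(x,1) + (1+y)\, M^U(x,y,1)$. Since permutations in $N$ have rev-tier $0$, the factor $F^N(x,1)$ contributes only to the constant term in $y$ and therefore does not affect the $y^t$ coefficient for any $t\ge 1$.

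Next I would observe that, by the definition $M^U_j(x) = \tfrac{d^j}{dy^j} M^U(x,y,1)\big|_{y=0}$, the coefficient of $y^j$ in $M^U(x,y,1)$ is exactly $M^U_j(x)/j!$. Extracting $[y^2]$ from $(1+y)M^U(x,y,1)$ then yields
\[
[y^2]F(x,y,1) \;=\; \frac{M^U_2(x)}{2!} + \frac{M^U_1(x)}{1!},
\]
which is the desired generating function.

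The only remaining step is to confirm the numerical expansion. For this I would substitute the closed forms for $M^U_1(x)$ and $M^U_2(x)$ already derived in the previous two corollaries (with $t_0=C(x)$, $t_1=C(xC(x))$, $t_2=C(xC(x)C(xC(x)))$), expand each as a power series in $x$, and add half of the second to all of the first. The expected initial terms are $2x^4+21x^5+148x^6+884x^7+\cdots$ from $M^U_1(x)$ and $\tfrac12(10x^5+160x^6+1636x^7+\cdots)=5x^5+80x^6+818x^7+\cdots$ from $\tfrac12 M^U_2(x)$, whose sum starts $2x^4+26x^5+228x^6+1702x^7+\cdots$, matching the values in Table~\ref{table_1}.

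There is no real obstacle here: the substance lies in the kernel-method computation of $M^U_1$ and $M^U_2$ which is already completed. The only care needed is to verify that $F^N(x,1)$ genuinely contributes nothing to $[y^t]F(x,y,1)$ for $t\ge 1$ (immediate from the definition of $N$) and that the factor $(1+y)$ rather than $(1+1/y)$ is the correct one to use when computing coefficients at nonnegative $t$; the latter is the reason one works with $M^U$ rather than $M^D$ in this extraction.
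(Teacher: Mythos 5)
Your proposal is correct and follows essentially the same route as the paper: extract the $y^2$ coefficient from $F(x,y,1)=F^N(x,1)+(1+y)M^U(x,y,1)$ (using $M^D(x,y,1)=yM^U(x,y,1)$) to get $\tfrac{1}{2!}M_2^U(x)+M_1^U(x)$, then sum the already-computed series expansions. The numerical check ($21+5=26$, $148+80=228$, $884+818=1702$) matches both the stated corollary and Table~\ref{table_1}.
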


\section{A new unbalanced Wilf equivalence}

We conclude with a summary of unbalanced Wilf equivalences.  Two permutation classes are said to be \emph{Wilf equivalent} if their enumeration is the same.  The terminology \emph{unbalanced Wilf equivalence} refers to two Wilf equivalent permutation classes whose bases have unequal cardinalities.  The first unbalanced Wilf equivalence was found by Atkinson, Murphy, and Ru\v{s}kuc~\cite{atkinson:sorting-with-tw:} when combining their result with that of B\'ona~\cite{Bona:exact-enumerati:} in 1997 as mentioned in Section~\ref{intro}.  In 2016, Egge~\cite{egge:talk-unbalanced} conjectured there were also unbalanced Wilf equivalences between two permutation classes whose bases are both finite.  Burstein and Pantone~\cite{burstein:unbalanced-wilf} and Bloom and Burstein~\cite{bloom:egge-triples} proved the first examples of this conjecture.  

Our data (some of which is shown in the second column of Table~\ref{table_2}), suggested the enumeration of the class of $2$-reverse pass sortable permutations is given by sequence A165543 in OEIS~\cite{OEIS}.  The sequence was first proven by Callan~\cite{callan:permutations} to enumerate the class $\Av(4321, 4213)$  with a  more intuitive bijection (as requested by Callan) later given by Bloom and Vatter~\cite{bloom:two-vignettes}.  A conjecture of a new unbalanced Wilf equivalence based on this work and Theorem~\ref{tier_1} was presented at Permutation Patterns 2018.  
Soon after,  Bean~\cite{temp} proved it with the aid of a computer program he wrote.

\begin{theorem}
The permutation classes $\Av(4321, 4213)$ and $\Av(2413, 2431, 23154)$ are Wilf equivalent.  Specifically, both classes of permutations are enumerated by the generating function
$$\frac{1}{1-xC(xC(x))}$$
where $C(x)$ is the generating function for the Catalan numbers, sequence  A000108 in OEIS~\cite{OEIS}.
\end{theorem}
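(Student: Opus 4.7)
The plan is to establish the generating-function identity for $\Av(2413, 2431, 23154)$ directly, since Callan has already shown that $\Av(4321, 4213)$ is enumerated by $\frac{1}{1-xC(xC(x))}$. Each of the basis permutations $2413$, $2431$, $23154$ is sum-indecomposable, so any occurrence of a basis element in a direct sum $\sigma \oplus \tau$ must lie entirely within one summand. Hence $\Av(2413, 2431, 23154)$ is closed under direct sum, so its generating function factors as $F(x) = 1/(1 - G(x))$, where $G(x)$ enumerates the sum-indecomposable members of the class. The task reduces to proving $G(x) = xC(xC(x))$.

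To pin down $G$, I would decompose each sum-indecomposable $\pi \in \Av(2413, 2431, 23154)$ by writing $\pi = L \, n \, R$, where $n$ is the maximum entry sitting in position $i$. A case analysis of those occurrences of $2413$ and $2431$ in which $n$ plays the role of the top entry $4$ yields a clean structural constraint: for every $a \in L$, the set $R$ must lie entirely below $a$ or entirely above $a$. Combined with $L \cup R = \{1, \ldots, n-1\}$, this forces $R$ to consist of a contiguous interval $[m, M]$ of values, while $L_{\text{low}} = L \cap [1, m-1]$ and $L_{\text{high}} = L \cap [M+1, n-1]$ partition the remaining values. The remaining constraints from the three basis patterns can then be read off: internally $L$ and $R$ must lie in the class; the $2431$ cross-case requires $L_{\text{high}}$ to avoid $132$ whenever $R$ is nonempty; the $2413$ cross-case forbids a specific interleaving of $L_{\text{low}}$ and $L_{\text{high}}$ inside $L$; and analogous conditions arise from $23154$. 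The resulting recursive description should be assembled into a functional equation whose solution is $G = xC(xC(x))$.

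The main obstacle is the bookkeeping in the last step. Three basis patterns each contribute several potential crossing configurations, and one has to verify carefully that after consolidation all the conditions on $L$ collapse into a single Catalan-controlled constraint, rather than producing an overdetermined or tangled system. Keeping track of the empty versus nonempty $R$ cases, and of the simultaneous pattern-avoidance conditions on the low part, the high part, and their interleaving in $L$, is delicate; this is precisely the structural bookkeeping that Bean's computer-assisted combinatorial-exploration framework automates and the mode by which the equivalence was ultimately certified.

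A complementary, purely bijective route would be to pair the Bloom--Vatter structural decomposition of $\Av(4321, 4213)$ with the $L \, n \, R$ decomposition above and exhibit an explicit size-preserving map between the two classes. Since both classes admit a direct-sum factorization, it would suffice to bijectively match their sum-indecomposable members; the nested Catalan shape of $xC(xC(x))$ suggests that each side decomposes into an outer Catalan object whose internal nodes are further Catalan-labelled, so aligning these two levels on the two sides is the natural bijective strategy.
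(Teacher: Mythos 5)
Your very first structural step fails: it is not true that all three basis elements are sum-indecomposable. The permutation $23154$ decomposes as $231 \oplus 21$ (its first three entries are exactly the set $\{1,2,3\}$), so it is sum-decomposable. Consequently $\Av(2413, 2431, 23154)$ is \emph{not} closed under direct sum: $231$ and $21$ each belong to the class, yet $231 \oplus 21 = 23154$ does not. The factorization $F(x) = 1/(1 - G(x))$, with $G$ the generating function for the sum-indecomposable members, is valid precisely for sum-closed classes, so it does not apply here, and the entire reduction to proving $G = xC(xC(x))$ collapses. The shape $1/(1 - xC(xC(x)))$ of the answer reflects the structure of $\Av(4321, 4213)$, whose two basis elements genuinely are sum-indecomposable and which therefore is sum-closed; for $\Av(2413, 2431, 23154)$ the same generating function arises for a different reason, and any correct direct argument must account for the constraint that $23154$ imposes \emph{across} summands. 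The subsequent $L\,n\,R$ case analysis is built on this false foundation and cannot be rescued by more careful bookkeeping of the crossing configurations.

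For comparison, the paper does not argue via sum decomposition at all: it cites Bean's computer-assisted proof of the conjectured equivalence, and notes that the identity also follows by summing, for $t = 0$ and $t = 1$, the rev-tier generating functions computed in Section~\ref{gen-fun}. Those are obtained from the minimal-entry insertion operators $\psi_i$, the recurrences of Theorem~\ref{recur_mu}, and the kernel method, combined with Theorem~\ref{tier_1}, which identifies the $2$-reverse-pass sortable permutations with $\Av(2413, 2431, 23154)$. If you want a self-contained argument in the spirit of your proposal, the insertion-of-the-minimum decomposition is the workable substitute for the direct-sum decomposition you attempted.
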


This theorem can also be proved by summing the generating functions found in Section~\ref{gen-fun} for $t=0,1$.
\begin{center}
\begin{table}
\[
\begin{tabular}{|r||c|c|c|c|c|c|c|c|c|}
\hline
       & t = 0 	& t $\le$ 1 	& t $\le$ 2		& t $\le$ 3 		& t $\le$ 4 		& t $\le$ 5		& t $\le$ 6 & t $\le$ 7	& t $\le$ 8		\\ \hline
n = 1  &	1	  &	1	&	1	&	1	&	1	&	1		&	1		&		1	&	1\\ \hline
n = 2  &	2 	  & 	2	&	2	&	2	&	2	&	2		&	2		&		2	&	2\\ \hline
n = 3  &	5	  &  6		&	6	&	6	&	6	&	6		&	6		& 		6	&	6 \\ \hline
n = 4  &	14	  & 22	&	24	&	24	&	24	&	24		&	24		&		24	&	24  \\ \hline
n = 5  &	42	  & 89	&	115	&	120	&	120	&	120		&	120		&		120	&	 120 \\ \hline
n = 6  & 	132	  & 380	&	608	& 704	&	720	&	720		&	720		&		720	&	720   \\ \hline
n = 7  &	429	  & 1678	& 3380	& 4558	& 4979	&	5040		&	5040		&		5040	&	5040   \\ \hline
n = 8  &	1430  &	7584	& 19288	& 31128	& 37946	&	40048	&	40320	&		40320&	40320  \\ \hline
n = 9  &	4862  &	34875& 111720	& 218287	& 306307	&	349654	&	361495	&	362880	&	362880   \\ \hline
n = 10 &	16796 &	162560& 653426& 1549986& 2547042&	3244686	&	3546688	& 3620864	& 3628800	\\ \hline
\end{tabular}	
\]
\caption{Number of permutations of length $n$ and rev-tier at most $t$}
\label{table_2}
\end{table}
\end{center}


\bibliographystyle{acm}
\bibliography{refs_tier}

\end{document}